\newtheorem{theorem}{Theorem}[section]
\newtheorem{lemma}[theorem]{Lemma}
\newtheorem{thm}[theorem]{Theorem}
\newtheorem{prop}[theorem]{Proposition}
\newtheorem{rem}[theorem]{Remark}
\newtheorem{coro}[theorem]{Corollary}
\newtheorem{con/que}[theorem]{Conjecture/Question}
\newtheorem{que}[theorem]{Question}
\newcommand{\ra}{\rightarrow}
\newcommand{\mo}{\mathcal{O}}
\newcommand{\mf}{\mathcal{F}}
\newcommand{\ma}{\mathcal{A}}
\newcommand{\mb}{\mathcal{B}}
\newcommand{\me}{\mathcal{E}}
\newcommand{\ms}{\mathcal{S}}
\newcommand{\mt}{\mathcal{T}}
\newcommand{\mh}{\mathcal{H}}
\newcommand{\mm}{\mathcal{M}}
\newcommand{\bm}{\mathbf{M}}
\newcommand{\mc}{\mathcal{C}}
\newcommand{\bcoh}{\mathbf{Coh}}
\newcommand{\Aut}{\operatorname{Aut}}
\newcommand{\Hom}{\operatorname{Hom}}
\newcommand{\Ext}{\operatorname{Ext}}
\newcommand{\Ker}{\operatorname{Ker}}
\newcommand{\Tor}{\operatorname{Tor}}
\newcommand{\Ima}{\operatorname{Im}}
\newcommand{\Tot}{\operatorname{Tot}}
\def\<{\langle}
\def\>{\rangle}
\newcommand{\ls}{|L|}
\newcommand{\p}{\mathbb{P}}
\begin{document}
\fontsize{12pt}{14pt} \textwidth=14cm \textheight=21 cm
\numberwithin{equation}{section}
\title{Sheaves on non-reduced curves in a projective surface.}
\author{Yao Yuan}
\subjclass[2010]{Primary 14D05}
\thanks{The author is supported by NSFC 21022107 and NSFC 11771229.  }

\begin{abstract}Sheaves on non-reduced curves can appear in moduli space of 1-dimensional semistable sheaves over a surface, and moduli space of Higgs bundles as well.  We estimate the dimension of the stack $\bm_{X}(nC,\chi)$ of pure sheaves supported at the non-reduced curve $nC~(n\geq 2)$ with $C$ an integral curve on $X$.  We prove that the Hilbert-Chow morphism $h_{L,\chi}:\mm^H_X(L,\chi)\ra \ls$ sending each semistable 1-dimensional sheaf to its support have all its fibers of the same dimension for $X$ Fano or with trivial canonical line bundle and $\ls$ contains integral curves.  

~~~

\textbf{Keywords:} 1-dimensional pure sheaves on projective surfaces,  Hilbert-Chow morphism, Hitchin fibrations, stacks.
\end{abstract}

\maketitle
\tableofcontents
\section{Introduction.}

\subsection{Motiviations.} Sheaves on non-reduced curves can appear in two types of moduli stacks $\mm^H_X(L,\chi)$ and $\mm_{C,D}^{Higgs}(n,\chi)$ as follows.
\begin{enumerate}
\item[(A)] $\mm^H_X(L,\chi)$ parametrizes semistable sheaves $\mf$ with respect to the polarization $H$ on a projective surface $X$, supported on a curve in the linear system $\ls$ and of Euler characteristic $\chi$.  We have a Hilbert-Chow morphism
\begin{equation}\label{HCmap}h_{L,\chi}:\mm^H_X(L,\chi)\ra \ls,~~\mf\mapsto \text{supp}(\mf).\end{equation}
In general $\ls$ contains singular curves and also curves with non-reduced components.

~~

\item[(B)] $\mm_{C,D}^{Higgs}(n,\chi)$ parametrizes semistable Higgs bundles $(\me,\Theta)$ with respect to the effective divisor $D$ on a smooth curves $C$ with $\me$ rank $n$ and Euler characteristic $\chi$.  We have the Hitchin fibration
\begin{equation}\label{HF}h_{D,\chi}:\mm_{C,D}^{Higgs}(n,\chi)\ra \bigoplus_{i=1}^n H^0(C,\mo_C(iD)),~~(\me,\Theta)\mapsto \text{char}(\Theta).\end{equation}
Denote by $\Tot(\mo_C(D))$ the total space of the the line bundle $\mo_C(D)$.  Let $p:\Tot(\mo_C(D))\ra C$ be the projection, then every Higgs bundle $(\me,\Theta)$ on $C$ gives a 1-dimensional pure sheaf $\mf_{\me}$ on $\Tot(\mo_C(D))$ via the following exact sequence
\[0\ra p^*\me\xrightarrow{\lambda\times p^*\text{id}_{\me}-p^*\Theta}p^*\me(D)\ra \mf_{\me}\ra 0. \]
We naturally have that $\text{supp}(\mf_{\me})$ is defined by the equation $\text{char}(\Theta)$ where $\lambda$ is the variable on the fiber of $p$.  Hence the fiber of the Hitchin fibration over $\lambda^n$, which is also called the central fiber of the Hitchin fibration, consists of sheaves (with some semistablity) on the non-reduced curve in $\Tot(\mo_C(D))$ defined by $\lambda^n$.

Let $X:=\mathbb{P}(\mo_C(D)\oplus\mo_C)$ be the ruled surface over $C$.  Then $X$ is projective and the central fiber of $h_{D,\chi}$ consists of sheaves (with some semistablity) on the non-reduced curve $nC_e$ with $C_e$ the section satisfying $C_e^2=\text{deg}(D)$ and $C_e.K_X=-\text{deg}(D)+2g_C-2$.
\end{enumerate}

People have to consider the dimension of each fibers of $h_{L,\chi}$ in $(A)$ or $h_{D,\chi}$ in $(B)$, if they want to study the flatness of the fibrations,  or if they want to compute the cohomology of sheaves (constructible or coherent) over the moduli space $\mm^H_X(L,\chi)$ ($\mm_{C,D}^{Higgs}(n,\chi)$, resp.) via the fibration $h_{L,\chi}$ ($h_{D,\chi}$, resp.).  

The fibers of $h_{L,\chi}$ ($h_{D,\chi}$, resp.) over integral supports are their compactified Jacobians and hence are of equal dimension denoted by $N_L$ ($N_{C,D,n}$, resp.) which only depends on $\ls$ ($(C,D,n)$, resp.).  But the fibers over non-integral supports are much more complicated, among which the worst are fibers over non-reduced curves.  People expect those fibers would also have dimension $N_L$ or $N_{D,C,n}$ respectively, but in principle they can be of larger dimension. 
So we pose the following question.
\begin{que}\label{mque}Whether all fibers of $h_{L,\chi}$ in $(A)$ ($h_{D,\chi}$ in $(B)$, resp.) are of the expected dimension $N_{L}$ ($N_{C,D,n}$, resp.)?
\end{que}

Some results are already known for Question \ref{mque}.  For instance, Ginzburg showed in \cite{Gin} that the central fiber of $h_{D,\chi}$ is of expected dimension $N_{C,D,n}$ for $D$ the canonical divisor of $C$;  Chaudouard and Laumon showed 
 in \cite{CL} that the central fiber of $h_{D,\chi}$ is of dimension $\leq N_{C,D,n}$ for $\text{deg}(D)>2g-2$; and finally Maulik and Shen showed in \cite{MS} that the fibers of $h_{L,\chi}$ are of the expected dimension $N_L$ for $X$ a toric del Pezzo surface and $L$ ample effective on $X$.  There seems to be no more general result.  Especially the lack of the estimate of the dimension of all fibers of $h_{L,\chi}$ prevents the main result in \cite{MS} from generalizing to all del Pezzo surfaces not necessarily toric (see the paragraph after Remark 0.2 in \cite{MS}).

In this paper we give a complete answer to Question \ref{mque}.  

\subsection{Notations \& Conventions.} All through the paper, let $X$ be a projective surface over an algebraically closed field $\Bbbk$.  Let $C$ be an integral curve on $X$.  Let $\delta_C\in H^0(X,\mo_X(C))$ be the section defining $C$.  Denote by $nC$ the non-reduced curve with multiplicity $n$ over $C$, i.e. the 1-dimensional closed subscheme of $X$ defined by $\delta_C^n$.

Denote by $\overline{\bm_X}(nC,\chi)$ the stack of all 1-dimensional sheaves with schematic supported $nC$ and Euler characteristic $\chi$, and let $\bm_X(nC,\chi)\subset\overline{\bm_X}(nC,\chi)$ be the substack consisting of 1-dimensional pure sheaves.  Then we have $h^{-1}_{\mo_X(nC),\chi}(nC)\subset \bm_X(nC,\chi)$ where $h_{\mo_X(nC),\chi}$ is the Hilbert-Chow morphism defined in (\ref{HCmap}).

Because the stack $\mathbf{Vect}_C(n,\chi)$ of rank $n$ Euler characteristic $\chi$ vector bundles over $C$ is a substack of $\bm_X(nC,\chi)$, we have $$\dim\overline{\bm_X}(nC,\chi)\geq\dim \bm_X(nC,\chi)\geq \dim\mathbf{Vect}_C(n,\chi)=n^2(g_C-1).$$

We use $K_X$ to denote both the canonical divisor of $X$ and the canonical line bundle as well.  For any two (not necessarily integral) curves $C',C''$, we write $C'.K_X$ the intersection number of the divisor class of $C'$ with $K_X$, $C'.C''$ the intersection number of the divisor classes of these two curves and $C'^2:=C'.C'$.  

Denote by $g_{C'}$ the arithmetic genus of $C'$.  We have $g_{C'}-1=\frac{C'^2+C'.K_X}2.$

For a sheaf $\mf$ on $X$, let $\mf(\Sigma):=\mf\otimes\mo_X(\Sigma)$ for $\Sigma$ a curve or a divisor class.

Let $K(X),K(C)$ be the Grothendieck groups of coherent sheaves on $X$ and $C$ respectively.  Let $\star$ stands for $X$ or $C$.  Denote by $\chi_{\star}(-,-):K(\star)\times K(\star)\ra \mathbb{Z}$ the bilinear integral form on $K(\star)$ such that for every two coherent sheaves $\ms,\mt$ on $\star$,
$$\chi_{\star}([\ms],[\mt])=\sum_{j\geq 0}(-1)^j\dim\Ext^j_{\mo_{\star}}(\ms,\mt).$$

\subsection{Results \& Applications.}Our main result is the following theorem.
\begin{thm}[See Theorem \ref{main4}]\label{main1}For any integral curve $C\subset X$, we have
$$\dim \bm_X(nC,\chi)\begin{cases}\leq \frac{n^2C^2}2+\frac{nC.K_X}2=g_{nC}-1,\text{ if }C.K_X\leq 0\\  \\ =\frac{n^2C^2}2+\frac{n^2C.K_X}2=n^2(g_{C}-1),\text{ if }C.K_X>0\end{cases}.$$
\end{thm}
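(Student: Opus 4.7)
My plan is to stratify $\bm_X(nC,\chi)$ by the $\delta_C$-adic filtration of a pure sheaf, estimate the dimension of each stratum via a tangent-minus-automorphism count, and then optimize over the discrete data. For $\mf\in\bm_X(nC,\chi)$, consider the decreasing filtration
\[\mf\supseteq\delta_C\mf\supseteq\cdots\supseteq\delta_C^{n-1}\mf\supseteq\delta_C^n\mf=0,\]
with graded pieces $\mg_i:=\delta_C^i\mf/\delta_C^{i+1}\mf$, each a coherent sheaf on $C$. Multiplication by $\delta_C$ induces surjections $\mg_i\twoheadrightarrow\mg_{i+1}$, so the generic ranks $r_i:=\operatorname{rk}\mg_i$ satisfy $r_0\geq\cdots\geq r_{n-1}\geq 0$ with $\sum_i r_i=n$ (since the cycle class of $\mf$ is $nC$); write $\chi_i:=\chi(\mg_i)$ so $\sum\chi_i=\chi$. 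I stratify $\bm_X(nC,\chi)$ by the tuple $(r_\bullet,\chi_\bullet)$ and let $\mathcal{S}_{r_\bullet,\chi_\bullet}$ denote the corresponding locally closed substack.

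On $\mathcal{S}_{r_\bullet,\chi_\bullet}$ the sheaf $\mf$ is an iterated extension of the $\mg_i$'s, so a standard count gives
\[\dim\mathcal{S}_{r_\bullet,\chi_\bullet}\leq\sum_i r_i^2(g_C-1)+\sum_{i<j}\bigl(\dim\Ext^1_X(\mg_i,\mg_j)-\dim\Hom_X(\mg_i,\mg_j)\bigr),\]
using that $\bcoh_C(r,\chi)$ on the integral Gorenstein curve $C$ has dimension $r^2(g_C-1)$. From the Koszul resolution of $i\colon C\hookrightarrow X$ one computes $\chi_X(\mg_i,\mg_j)=-r_ir_jC^2$ (this depends only on the numerical classes), and Serre duality on $X$ gives $\Ext^2_X(\mg_i,\mg_j)\cong\Hom_X(\mg_j,\mg_i\otimes K_X)^\vee$. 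Combining these with the adjunction identity $g_C-1=\tfrac12(C^2+C.K_X)$ and the elementary $\sum_{i<j}r_ir_j=\tfrac12(n^2-\sum_i r_i^2)$, the estimate collapses to
\[\dim\mathcal{S}_{r_\bullet,\chi_\bullet}\leq\frac{C.K_X}{2}\sum_i r_i^2+\frac{n^2C^2}{2}+\sum_{i<j}\dim\Hom_C(\mg_j,\mg_i\otimes K_X|_C).\]

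Dropping the residual Hom sum, the optimization over rank tuples subject to $\sum r_i=n$ is now transparent. When $C.K_X\leq 0$, the coefficient of $\sum r_i^2$ is non-positive, so the first two terms are maximized by minimizing $\sum r_i^2$, namely at $r_0=\cdots=r_{n-1}=1$, yielding $\tfrac12(nC.K_X+n^2C^2)=g_{nC}-1$. When $C.K_X>0$ the coefficient is positive and the maximum occurs on the vector bundle stratum $r_0=n$, $r_i=0$ for $i\geq 1$, yielding $\tfrac12(n^2C.K_X+n^2C^2)=n^2(g_C-1)$; on that extremal stratum the residual Hom sum vanishes automatically and the bound is sharp, matching the a priori lower bound $\dim\mathbf{Vect}_C(n,\chi)=n^2(g_C-1)$.

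The principal obstacle is controlling the residual $\sum_{i<j}\dim\Hom_C(\mg_j,\mg_i\otimes K_X|_C)$, which depends on the specific $\mg_i$'s and can jump on closed substrata. I plan to deal with it by refining the stratification using Harder--Narasimhan filtrations of the $\mg_i$: each upward jump of this Hom term forces a slope condition on the pair $(\mg_j,\mg_i\otimes K_X|_C)$ (since $\deg K_X|_C=C.K_X$), cutting out a proper closed locus in $\bcoh_C(r_i,\chi_i)\times\bcoh_C(r_j,\chi_j)$ whose codimension should absorb the extra Hom contribution. A jumping-locus estimate of this flavor, combined with the combinatorial optimization above, should yield both bounds in the theorem.
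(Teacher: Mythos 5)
Your skeleton (stratify by the $\delta_C$-adic filtration, bound each stratum by an extension count, optimize the quadratic form over rank vectors) matches the paper's, and the final combinatorial optimization is correct. But two steps that you treat as routine are exactly where the work lies, and as written both are gaps. The first is the residual term $\sum_{i<j}\dim\Hom_C(\mg_j,\mg_i\otimes K_X|_C)$. Your plan to absorb it by a Harder--Narasimhan jumping-locus estimate is not carried out, and it is not a small fix: on the full stack $\bcoh_C$ the unstable and decomposable loci have large Hom spaces on large strata, and making the trade-off precise would be a Schiffmann-style counting argument in its own right. The paper never needs this because it kills the term by an exact cancellation that your count cannot see, since you use only the extension structure and not the maps between graded pieces. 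Multiplication by $\delta_C$ induces surjections $R_i(-C)\twoheadrightarrow R_{i-1}$ between consecutive quotients; parametrizing sheaves through the stack of such chains shows that the image of the ``take graded pieces'' map sits inside $\prod_i\bcoh_{\beta_i}$ with codimension at least $\sum_{i=2}^m\dim\Hom_{\mo_C}(R_i,R_{i-1}(C))$. On the other side, only \emph{adjacent} Serre-dual terms occur: any $g\in\Hom_X(\mf^{m-1},R_m(K_X))$ kills $\mf^{m-2}$, so $\Ext^2_X(R_m,\mf^{m-1})$ has dimension $\dim\Hom_{\mo_C}(R_{m-1},R_m(K_X))$, which by Serre duality on $C$ (using adjunction, $(K_X+C)|_C$ is the dualizing sheaf) equals $\dim\Ext^1_{\mo_C}(R_m,R_{m-1}(C))$. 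The difference of the two contributions is the purely numerical $-\chi_C(R_m,R_{m-1}(C))$, and the whole estimate closes without any leftover Hom term.

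The second gap is your appeal to $\dim\bcoh_C(r,\chi)=r^2(g_C-1)$ for an arbitrary integral Gorenstein curve. This identity is classical for $C$ smooth (where $\dim\bcoh_\alpha=-\chi_C(\alpha,\alpha)$), but for singular $C$ the graded pieces $\mg_i$ of a pure sheaf on $nC$ can carry torsion at the singular points, and the dimension of the full stack of coherent sheaves on a singular (even planar) curve is not an off-the-shelf fact. The paper only ever runs the extension count on smooth curves: for singular integral $C$ it blows up a singular point $P$ of multiplicity $\theta$, shows $f^*$ is injective on pure sheaves and splits $f^*\mf$ into a piece supported on $nC_0$ and a piece on $n\theta E$, and then inducts on the arithmetic genus, recombining the two estimates with a convexity inequality on the rank vectors (Lemma 4.6). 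Without either that reduction or an actual proof of the dimension formula for $\bcoh$ on singular curves, your argument establishes the theorem only for $C$ smooth.
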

Theorem \ref{main1} suggests that in order to have a positive answer to Question 1.1, it is reasonable to ask the surface $X$ in $(A)$ to be Fano or with $K_X$ trivial and to ask the divisor $D$ in $(B)$ to satisfy $\text{deg}(D)\geq 2g-2$.  

\begin{coro}\label{mcoro}Let $C'=\displaystyle{\prod_{j=1}^s}n_jC_j$ be a curve in the linear system $\ls$ on $X$ with $C_j$ pairwise distinct integral curves.  If $C_j.K_X\leq 0$ for $j=1,\cdots,s$, then 
\[\dim h_{L,\chi}^{-1}(C')\leq g_{C'}-1=N_{L}.\]
In particular, if $X$ is Fano or with $K_X$ trivial, and if either $\ls$ contains integral curves, or $H^1(\mo_X)=H^1(L)=0$ and $\mm^H_X(L,\chi)^s$, the substack of $\mm^H_X(L,\chi)$ consisting of stable sheaves, is not empty, then the Hilbert-Chow morphism $h_{L,\chi}$ in (\ref{HCmap}) has all fibers the expected dimension $g_{\ls}-1$ with $g_{\ls}$ the arithmetic genus of any curve in $\ls$. 
\end{coro}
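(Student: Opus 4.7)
The plan is to prove the first inequality $\dim h^{-1}_{L,\chi}(C') \leq g_{C'}-1$ by induction on $s$, the number of distinct integral components of $C'$; the base case $s=1$ is Theorem \ref{main1}.

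For the inductive step, given $\mf \in h^{-1}_{L,\chi}(C')$, I consider the short exact sequence
\[0 \to \mf_1 \to \mf \to \mf_2 \to 0,\]
where $\mf_1 \subset \mf$ is the kernel of multiplication by $\delta_{C_1}^{n_1}\colon\mf\to\mf(n_1 C_1)$. A local calculation in coordinates $(x,y)$ with $x=\delta_{C_1}$ (the only nontrivial case being near an intersection point of $C_1$ with $C'':=\prod_{j\geq 2}n_jC_j$) shows $\mf_1$ is scheme-theoretically supported on $n_1C_1$ and $\mf_2$ on $C''$. By construction $\mf_1$ is the maximal subsheaf of $\mf$ set-theoretically supported on $C_1$, and this forces $\mf_2$ to be pure: any $0$-dimensional subsheaf of $\mf_2$ would have support in $n_1 C_1 \cap C''$ and lift to a pure $1$-dimensional subsheaf of $\mf$ properly containing $\mf_1$ but still set-theoretically supported on $C_1$, a contradiction. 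Comparing first Chern classes forces $c_1(\mf_1)=n_1C_1$ and $c_1(\mf_2)=C''$.

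Stratifying by $\chi_1=\chi(\mf_1)$ (which ranges over a finite set by boundedness), each stratum of $h^{-1}_{L,\chi}(C')$ is dominated by the stack of triples
\[\{(\mf_1,\mf_2,\varepsilon):\mf_1\in\bm_X(n_1C_1,\chi_1),\ \mf_2\in h^{-1}_{\mo_X(C''),\chi-\chi_1}(C''),\ \varepsilon\in\Ext^1(\mf_2,\mf_1)\}.\]
Since $\mf_1,\mf_2$ are pure $1$-dimensional with no common $1$-dimensional component of support, $\Hom(\mf_2,\mf_1)=0$ and, by Serre duality $\Ext^2(\mf_2,\mf_1)\cong\Hom(\mf_1,\mf_2\otimes K_X)^\vee$, also $\Ext^2(\mf_2,\mf_1)=0$. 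Hence the $\Ext^1$-dimension is computed by Riemann-Roch:
\[\dim\Ext^1(\mf_2,\mf_1) = -\chi_X(\mf_2,\mf_1) = c_1(\mf_1).c_1(\mf_2) = n_1C_1.C''.\]
Applying Theorem \ref{main1} to $\mf_1$ and the inductive hypothesis (with $L'':=\mo_X(C'')$) to $\mf_2$, and using the identity
\[g_{C'}-1 = (g_{n_1C_1}-1)+(g_{C''}-1)+n_1C_1.C'',\]
immediate from expanding $(n_1C_1+C'')^2$ together with the additivity of the $K_X$-intersection, completes the induction.

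For the ``in particular'' assertion: under $X$ Fano or $K_X$ trivial, every integral component $C_j$ satisfies $C_j.K_X\leq 0$, so the first part gives $\dim h^{-1}_{L,\chi}(C')\leq g_{|L|}-1$ uniformly on $|L|$. For the matching lower bound, if $|L|$ contains an integral curve $C_0$ then $h^{-1}(C_0)$ is the compactified Jacobian of dimension $g_{|L|}-1$, and Chevalley's upper semicontinuity of fiber dimension for the proper morphism $h_{L,\chi}$ over the irreducible base $|L|$ yields $\dim h^{-1}(C')\geq g_{|L|}-1$ for every $C'$. In the second alternative, the deformation formula $\dim_\mf\mm^H_X(L,\chi)=\dim\Ext^1(\mf,\mf)$ at a stable $\mf$, combined with the $K_X$-hypothesis and the Riemann-Roch computation of $\dim|L|$ from the $H^1$-vanishings, yields $\dim\mm^H_X(L,\chi)\geq\dim|L|+(g_{|L|}-1)$, from which upper semicontinuity again gives the lower bound on every fiber.

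The main technical obstacle is making the Artin stack dimension arithmetic in the extension step fully rigorous, in particular verifying that the forgetful map from the triples stack to $h^{-1}_{L,\chi}(C')$ contributes no extra dimension—this is a standard but careful computation that simplifies because of the vanishing $\Hom(\mf_2,\mf_1)=0$. One also needs to check that the potentially degenerate strata where some $\mf_i$ has smaller-than-expected scheme-theoretic support (but the correct $c_1$) do not exceed the expected dimension, which reduces to a direct genus comparison using $C_j.K_X\leq 0$.
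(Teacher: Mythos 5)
Your argument is essentially the paper's own proof: both decompose a sheaf supported on $C'$ into pure pieces supported on the individual $n_jC_j$, use $\Hom=\Ext^2=0$ between pure sheaves on distinct integral curves to compute $\dim\Ext^1=-\chi_X(\mf_i,\mf_j)=n_in_jC_i.C_j$ by Riemann--Roch, bound each piece by Theorem \ref{main1}, and reassemble via the adjunction identity $g_{C'}-1=\sum_j(g_{n_jC_j}-1)+\sum_{i<j}n_in_jC_i.C_j$, with the lower bound in the second part obtained identically (semicontinuity over an integral member, resp.\ smoothness of the stable locus of dimension $\dim|L|+g_{|L|}-1$). The only substantive difference is organizational --- you peel off one component at a time by induction on $s$ via $\Ker(\delta_{C_1}^{n_1})$ rather than taking the full iterated extension at once --- and the one point to adjust is that the quotient $\mf_2$ need not be semistable, so the inductive statement must be phrased for the stack of pure sheaves with support cycle $C''$ (as the paper implicitly does with the stacks $\bm_X(n_jC_j,\chi_j)$) rather than for a fiber of the Hilbert--Chow morphism on the semistable moduli space.
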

\begin{proof}Every sheaf in $h_{L,\chi}^{-1}(C')$ can be realized as an excessive extension of pure sheaves on $n_jC_j, ~j=1,\cdots,s$.  Since $C_j$ are pairwise distinct integral curves, for sheaves $\mf_i\in \bm_X(n_jC_j,\chi_j)$ we have 
\[\Hom_{\mo_X}(\mf_i,\mf_j)=\Ext^2_{\mo_X}(\mf_i,\mf_j)=0,~\dim\Ext^1_{\mo_X}(\mf_i,\mf_j)=-\chi_X(\mf_i,\mf_j),~\forall~i\neq j.\]

Hence we have
\begin{eqnarray}\dim h_{L,\chi}^{-1}(C')&\leq& \sum_{j=1}^s\dim\bm_{X}(n_jC_j,\chi_j)-\sum_{i<j}\chi_X(\mf_i,\mf_j)\nonumber\\
&=&\sum_{j=1}^s\left(\frac{n_j^2C_j^2}2+\frac{n_jC_j.K_X}2\right)+\sum_{i<j}n_in_jC_i.C_j\nonumber\\
&=&\frac{(C')^2}2+\frac{C'.K_X}2=g_{C'}-1=N_L.\end{eqnarray}

Let $X$ be Fano or with $K_X$ trivial.  If $\ls$ contains integral curves, then by semicontinuity every fiber of $h_{L,\chi}$ is of dimension no less than $g_{\ls}-1$.  If $H^1(\mo_X)=H^1(L)=0$,  then $\mm^H_X(L,\chi)^s\neq \emptyset$ is smooth of dimension $g_{\ls}-1+\dim\ls$.  Every fiber of $h_{L,\chi}$ is a closed subscheme of $\mm^H_X(L,\chi)$ defined by $\dim\ls$ equations, hence of dimension no less than $g_{\ls}-1$.  We have proved the corollary.
\end{proof}

With Corollary \ref{mcoro}, we can generalize Theorem 0.1 in \cite{MS} to all del Pezzo surfaces.  For the reader's convenience, we write the explicit statement as follows.
\begin{thm}[Generalization of Theorem 0.1 in \cite{MS} to all del Pezzo surfaces]\label{ms}
~Let $X$ be a del Pezzo surface \emph{not necessarily toric} with polarization $H$, and let $L$ be an ample curve class on $X$.  Let $M^H_X(L,\chi)$ be the coarse moudli space of 1-dimensional semistable sheaves with schematic supports in $\ls$ and Euler characteristic $\chi$.  Then we have for any $\chi,\chi'\in\mathbb{Z}$, there are isomorphisms of graded vector spaces
\[\text{IH}^*(M^H_X(L,\chi))\cong \text{IH}^*(M^H_X(L,\chi')),\]
where $\text{IH}^*(-)$ denotes the intersection cohomology.  Moreover, those isomorphisms respect perverse and Hodge filtrations carried by these vector spaces. 
\end{thm}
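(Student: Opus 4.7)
The plan is to import the strategy of \cite{MS} for toric del Pezzo surfaces essentially verbatim, using Corollary \ref{mcoro} as the replacement for their toric-specific equidimensionality input. The author's introduction explicitly states that this equidimensionality is the sole obstruction to the generalization (see the paragraph preceding Question \ref{mque}), so the task is to verify that Corollary \ref{mcoro} really does supply the missing input on any del Pezzo surface, and then to check that no other appeal to toricity lies buried in \cite{MS}.

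First I would verify that the hypotheses of Corollary \ref{mcoro} apply to any del Pezzo surface $X$. Since $-K_X$ is ample, every integral curve $C\subset X$ satisfies $C.K_X<0$, so for any decomposition $C'=\prod_j n_jC_j$ of a curve in $\ls$ into distinct integral components the condition $C_j.K_X\leq 0$ is automatic. One also has $H^1(\mo_X)=0$ by rationality, and $H^1(L)=0$ by Kodaira vanishing (as $L-K_X$ is ample for $L$ ample). Together with either the existence of integral curves in $\ls$ (which holds for a general ample $L$ via Bertini) or the non-emptiness of $\mm^H_X(L,\chi)^s$ for appropriate $\chi$, this puts us in the setting of Corollary \ref{mcoro}, yielding that $h_{L,\chi}$ is equidimensional with all fibers of dimension $g_{\ls}-1$.

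Next I would transplant the body of the \cite{MS} argument. Its heart is an application of the BBDG decomposition theorem and a support theorem to the proper map $h_{L,\chi}$, which identifies $(h_{L,\chi})_{*}IC_{\mm^H_X(L,\chi)}$ with the intermediate extension of the compactified-Jacobian local system from the open locus of integral curves --- a description that is manifestly insensitive to $\chi$. Toricity is invoked in \cite{MS} only to secure the equidimensionality of $h_{L,\chi}$; every subsequent step, including the upgrade to an isomorphism of mixed Hodge modules compatible with the perverse and Hodge filtrations, uses only smoothness of $X$, properness of $h_{L,\chi}$, and the equidimensionality that Corollary \ref{mcoro} now supplies on any del Pezzo.

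The main obstacle is expository rather than mathematical: I would have to make a careful bookkeeping pass through \cite{MS} to certify that toricity is truly not hidden anywhere beyond the equidimensionality step, and to confirm that the Hecke-type correspondence or local-system identification underlying the $\chi$-independence is formulated in sufficient generality to run on an arbitrary del Pezzo. If, as the author's own remarks in the introduction suggest, the toric hypothesis in \cite{MS} is isolated to the equidimensionality input, then Corollary \ref{mcoro} closes the gap and Theorem \ref{ms} follows with no additional work.
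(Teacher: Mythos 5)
Your proposal matches the paper's own treatment: the paper simply defers to \cite{MS} and notes that the single toric-specific input (Proposition 2.6 of \cite{MS}, the equidimensionality of $h_{L,\chi}$) is now supplied by Corollary \ref{mcoro} on an arbitrary del Pezzo surface. Your additional verification that the hypotheses of Corollary \ref{mcoro} hold (via $-K_X$ ample, $H^1(\mo_X)=H^1(L)=0$) is correct and only makes explicit what the paper leaves implicit.
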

We refer to \cite{MS} for the details of the proof of Theorem \ref{ms} while Proposition 2.6 in \cite{MS} can be extended to $X$ any del Pezzo surface by Corollary \ref{mcoro}.

Another application of Corollary \ref{mcoro} is the following theorem which generalizes Theorem 1.1 in \cite{Yuan4}.
\begin{thm}\label{genY}Let $X=\p^2$ and $H$ the hyperplan class on $X$.  Let $M_{\p^2}(d,\chi)$ be the coarse moudli space of 1-dimensional semistable sheaves with schematic supports in $|dH|$ and Euler characteristic $\chi$.  Then for $0\leq i\leq 2d-3$ we have 
\[\begin{cases}b_i^v(M_{\p^2}(d,\chi))=0,\text{ for }i\text{ odd }\\
b_i^v(M_{\p^2}(d,\chi))=b_{i}^v(X^{[\frac{d(d-3)}2-\chi_0]}),\text{ for }i\text{ even }\\
h^{p,i-p}_v(M_{\p^2}(d,\chi))=0,\text{ for }p\neq i-p\\
h^{p,i-p}_v(M_{\p^2}(d,\chi))=h^{p,i-p}_v(X^{[\frac{d(d-3)}2-\chi_0]}),\text{ for }i=2p
\end{cases}\]
where $b_i^v$ ($h^{p,q}_v$, resp.) denotes the $i$-th virtual Betti number ($(p,q)$-th virtual Hodge number, resp. ),  $X^{[n]}$ is the Hilbert scheme of $n$-points on $X$ and finally $\chi_0\equiv \chi~(d)$ with $-2d-1\leq \chi_0\leq -d+1$.
\end{thm}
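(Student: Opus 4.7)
My plan is to follow the proof of Theorem 1.1 in \cite{Yuan4} and supply, where required, the now-available estimate from Corollary \ref{mcoro} on the dimensions of fibers of the Hilbert--Chow morphism.

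First, I would reduce the claim to the finite list of residues $\chi=\chi_0$ with $-2d-1\leq \chi_0\leq -d+1$, using the isomorphism $M_{\p^2}(d,\chi)\cong M_{\p^2}(d,\chi+d)$ induced by tensoring with $\mo_{\p^2}(H)$ together with the relative duality $\mf\mapsto \mext^1(\mf,K_{\p^2})$, which sends $\chi$ to $-\chi$ up to a twist. These identifications preserve the virtual Betti and virtual Hodge numbers on both sides of the stated formula, so the general case reduces to the chosen range.

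Next, I would consider the Hilbert--Chow fibration $h_{dH,\chi}:M_{\p^2}(d,\chi)\to |dH|$. Since $\p^2$ is Fano and $|dH|$ obviously contains integral (indeed smooth) curves, Corollary \ref{mcoro} applies and guarantees that every fiber has the expected dimension $g_{|dH|}-1=(d-1)(d-2)/2$, including over the worst-case non-reduced curves $dL$ where $L$ is a line. This uniform fiber-dimension bound over non-integral supports is precisely the input that was not available in \cite{Yuan4}, and whose absence restricted the original theorem.

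With this in hand, I would run the cohomological argument of \cite{Yuan4}: stratify $|dH|$ by the decomposition type of the curve, use the fiber-dimension bound together with the codimension of each non-integral stratum in $|dH|$ to show that the preimage of any non-integral stratum has codimension at least $d-1$ in $M_{\p^2}(d,\chi)$, and conclude via the standard additivity of virtual Poincar\'e/Hodge polynomials that such strata do not contribute to $b_i^v$ or $h^{p,q}_v$ for $i\leq 2d-3$. Over the open integral stratum, the fibers are compactified Jacobians of integral plane curves, and their low-degree virtual Hodge numbers are identified in \cite{Yuan4} with those of the Hilbert scheme $X^{[n]}$ for $n=\frac{d(d-3)}{2}-\chi_0$, which matches the stated formula.

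The main obstacle in extending \cite{Yuan4} has always been the control of non-integral and especially non-reduced fibers of $h_{dH,\chi}$; Theorem \ref{main1} and its Corollary \ref{mcoro} remove exactly this obstacle, after which the rest of the argument is parallel to \cite{Yuan4} and carries no restriction on $d$ or $\chi$.
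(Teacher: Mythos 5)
Your proposal matches the paper's own proof, which likewise obtains the theorem directly from Theorem 6.11 of \cite{Yuan4}, with Corollary \ref{mcoro} supplying exactly the improved fiber-dimension (hence codimension) bound over non-integral and non-reduced supports that was missing in \cite{Yuan4}; your elaboration of the stratification, the additivity of virtual invariants, and the reduction to the residues $\chi_0$ is consistent with that reference. (Only a trivial slip: the expected fiber dimension is $g_{|dH|}-1=\frac{d(d-3)}{2}$, whereas $\frac{(d-1)(d-2)}{2}$ is $g_{|dH|}$ itself.)
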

Theorem \ref{genY} follows immediately from Theorem 6.11 in \cite{Yuan4} and Corollary \ref{mcoro} helps us to improve the estimate of the codimension of the subscheme of $M_{\p^2}(d,\chi)$ consisting of sheaves supported at non-integral curves.  Notice that by \cite{Bou}, $h^{p,q}_v(M_{\p^2}(d,\chi))=0$ for any $p\neq q$. 

By Theorem \ref{genY} both the virtual Betti number $b_i^v(M_{\p^2}(d,\chi))$ and the virtual Hodge number $h^{p,q}_v(M_{\p^2}(d,\chi))$ stabilize as $d\ra\infty$.  Write down the generating function
\[Z(t,q):=\sum_{d\geq 0} q^d\left(\sum_{i=0}^{2\dim M_{\p^2}(d,\chi)}b_i^v(M_{\p^2}(d,\chi))t^i\right).\]
Then the coefficient of $t^i$ in $(1-q)Z(t,q)$ is a polynomial in $q$.  However, whether $Z(t,q)$ is a rational function is still a wildly open question.

\subsection{Acknowledgements.} I would like to thank L. G\"ottsche for leading me to study 1-dimensional sheaves over surfaces when I was his PhD student.  I also would like to thank Junliang Shen for answering my questions on their paper \cite{MS}.  
\section{Filtrations for sheaves on non-reduced curves.}
For any $\mf\in \overline{\bm_X}(nC,\chi)$, we can describe $\mf$ via two filtrations as in the following proposition.
\begin{prop}\label{2Fil}Let $\mf\in \overline{\bm_X}(nC,\chi)$, then there are two filtrations of $\mf$:
\begin{enumerate}
\item[(1)]The so-called \textbf{the lower filtration of $\mf$}:
\[0=\mf_0\subsetneq \mf_1\subsetneq\cdots\subsetneq \mf_l=\mf,\]
such that $Q_i:=\mf_{i}/\mf_{i-1}$ are coherent sheaves on $C$ with rank $t_i$.  $\sum t_i=n$, and moreover there are injections $f^i_{\mf}:Q_{i}(-C)\hookrightarrow Q_{i-1}$ induced by $\mf$ for all $2\leq i\leq l$. 

\item[(2)]The so-called \textbf{the upper filtration of $\mf$}:
\[0=\mf^0\subsetneq \mf^1\subsetneq\cdots\subsetneq \mf^m=\mf,\]
such that $R_i:=\mf^{i}/\mf^{i-1}$ are coherent sheaves on $C$ with rank $r_i$.  $\sum r_i=n$, and moreover there are surjections $g^i_{\mf}:R_{i}(-C)\twoheadrightarrow R_{i-1}$ induced by $\mf$ for all $2\leq i\leq m$. 
\end{enumerate}
Moreover we have
\begin{enumerate}
\item[(i)] $l=m$.
\item[(ii)] $\forall~1\leq i\leq m,$ $t_i=r_{m-i+1}$.
\end{enumerate}
\end{prop}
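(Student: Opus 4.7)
The plan is to construct both filtrations canonically from multiplication by the defining section $\delta_C \in H^0(X, \mo_X(C))$, and then to derive the duality (i), (ii) from the Jordan-block decomposition of $\mf$ at the generic point of $C$.

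For (1), I take $\mf_i := \ker(\delta_C^i \colon \mf \to \mf(iC))$, the maximal subsheaf of $\mf$ killed by $\delta_C^i$. This gives an ascending chain $0 = \mf_0 \subseteq \mf_1 \subseteq \cdots \subseteq \mf_n = \mf$. Each $Q_i = \mf_i/\mf_{i-1}$ is annihilated by $\delta_C$, hence is a coherent $\mo_C$-module. Multiplication by $\delta_C$ sends $\mf_i$ into $\mf_{i-1}$ (because $\delta_C^{i-1}(\delta_C x) = \delta_C^i x = 0$) and $\mf_{i-1}$ into $\mf_{i-2}$, descending to $f^i_\mf\colon Q_i(-C) \to Q_{i-1}$; this is injective because $\delta_C x \in \mf_{i-2}$ forces $\delta_C^{i-1}x = 0$ and hence $x \in \mf_{i-1}$.

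For (2), I take $\mf^i := \mathrm{Im}(\delta_C^{n-i} \colon \mf(-(n-i)C) \to \mf)$, giving an ascending chain $0 = \mf^0 \subseteq \cdots \subseteq \mf^n = \mf$. Each $R_i = \mf^i/\mf^{i-1}$ is killed by $\delta_C$ since $\delta_C \cdot \mf^i \subseteq \mf^{i-1}$, so it is an $\mo_C$-module. Because $\mf^{i-1}$ is tautologically the image of $\delta_C \cdot \delta_C^{n-i}$, multiplication by $\delta_C$ furnishes a surjection $\mf^i(-C) \twoheadrightarrow \mf^{i-1}$, which restricts to a surjection $\mf^{i-1}(-C) \twoheadrightarrow \mf^{i-2}$; passing to quotients yields $g^i_\mf\colon R_i(-C) \twoheadrightarrow R_{i-1}$.

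For (i) and (ii), the main step, I would analyze both filtrations at the generic point $\eta$ of $C$. The local ring $\mo_{X,\eta}$ is a DVR with uniformizer $\delta_C$, so the structure theorem gives
\[
\mf_\eta \;\cong\; \bigoplus_{j=1}^k \mo_{X,\eta}/(\delta_C^{a_j}),
\]
with $\max_j a_j = n$ since $\mathrm{Ann}(\mf) = (\delta_C^n)$. A direct computation then yields
\[
t_i = \#\{\,j : a_j \geq i\,\}, \qquad r_i = \#\{\,j : a_j \geq n-i+1\,\},
\]
both positive precisely for $1 \leq i \leq n$. Since a nonzero generic rank forces the corresponding sheaf quotient to be nonzero as an $\mo_C$-module, both filtrations have exactly $n$ strict inclusions, so $l = m = n$. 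The substitution $i \mapsto m-i+1$ converts one rank formula into the other, yielding $t_i = r_{m-i+1}$.

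The main conceptual obstacle is identifying the correct pair of dual constructions: the lower filtration (kernels of $\delta_C^i$) and the upper filtration (images of $\delta_C^{n-i}$) are precisely the two canonical filtrations recording a nilpotent action, and the ranks $\{t_i\}$ and $\{r_i\}$ are exactly the conjugate pair of partition statistics attached to the Jordan-block multiset $\{a_j\}$. Once these two dual definitions are in hand the duality is just the conjugate-partition relation made global, and the only point requiring care is to check that potential torsion summands of some $Q_i$ or $R_i$ supported at closed points of $C$ do not perturb the strict filtration lengths beyond the value $n$ forced by the generic rank computation.
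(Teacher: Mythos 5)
Your constructions are the right ones and agree with the paper's (the paper takes $\mf_i=\Ker(\delta_C^i)$ and $\mf^i=\Ima(\delta_C^{m-i})$, and your verification of the injectivity of $f^i_{\mf}$ and surjectivity of $g^i_{\mf}$ is correct), but the write-up contains a concrete indexing error that propagates into false claims. You normalize the upper filtration by $\mf^i=\Ima(\delta_C^{n-i})$ and assert $\operatorname{Ann}(\mf)=(\delta_C^n)$, $\max_j a_j=n$, and $l=m=n$. None of these hold on $\overline{\bm_X}(nC,\chi)$: membership in this stack means $\mf$ is an $\mo_{nC}$-module whose support \emph{cycle} is $nC$ (equivalently $\sum_j a_j=n$ at the generic point, which is what makes $\sum t_i=n$ true), not that $\delta_C^{n-1}$ acts nontrivially. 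The paper explicitly notes $\mathbf{Vect}_C(n,\chi)\subset\bm_X(nC,\chi)$; for a rank-$n$ bundle on $C$ one has $l=m=1$ while $n$ is arbitrary, your chain $0=\mf^0\subseteq\cdots\subseteq\mf^n$ has $\mf^i=0$ for all $i\leq n-1$ and so violates the required strictness, and $t_i,r_i$ are certainly not ``positive precisely for $1\leq i\leq n$.'' The correct normalization (the one in the paper, consistent with Remark \ref{sup}) is $\mf^i=\Ima(\delta_C^{m-i})$ with $m=\min\{k:\delta_C^k\mf=0\}\leq n$; then $r_i=\#\{j:a_j\geq m-i+1\}$ and your conjugate-partition identity does yield (ii), since ranks are computed at the generic point.

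Second, the generic-point computation cannot by itself prove (i). The stack $\overline{\bm_X}(nC,\chi)$ contains non-pure sheaves, and a zero-dimensional torsion subsheaf can lengthen both filtrations beyond $\max_j a_j$: for instance $\mf=\mo_C^{\oplus n}\oplus T$ with $T$ supported at a point of $C$ and killed by $\delta_C^2$ but not by $\delta_C$ has $\max_j a_j=1$ yet $l=m=2$. You flag exactly this in your final sentence as ``the only point requiring care'' but never carry it out, and it is the entire content of (i). The repair is elementary and bypasses the generic point altogether: for the nilpotent action of $\delta_C$ the chain of kernels $\Ker(\delta_C^i)$ is strictly increasing until it equals $\mf$, the chain of images $\Ima(\delta_C^k)$ is strictly decreasing until it equals $0$, and both stabilize exactly at the nilpotency order $m$, so $l=m$ with no computation. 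With these two corrections your argument becomes a complete proof along the same lines as the paper's.
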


Proposition \ref{2Fil} is not difficult to prove.  One sees easily that $\mf_i=\Ker(\mf\xrightarrow{.\delta_C^i}\mf(iC))$ and $\mf^i=\Ima(\mf((i-m)C)\xrightarrow{.\delta_C^{m-i}}\mf)$.  In particular we have $Q_i\cong \mf_i/\Tor^1_{\mo_X}(\mf_i,\mo_{(i-1)C}((i-1)C))$ and $R_i\cong \mf^i\otimes\mo_C$.  The reader can figure out the proof of Proposition \ref{2Fil} by him/her-self or look at Proposition 5.7, Proposition 5.10 and Lemma 5.11 in \cite{Yuan4} for more details.

\begin{rem}\label{sup}Use the same notations as in Proposition \ref{2Fil}, we can see that $m\leq n$ and $m=\max\{k|\mf\xrightarrow{.\delta_C^{k-1}}\mf((k-1)C)\text{ is not zero}\}$.
\end{rem}

Recall that $K(C)$ is the Grothendieck group of coherent sheaves on $C$.  For any class $\beta\in K(C)$, denote by $r(\beta)$ ($\chi(\beta)$, resp.) the rank (Euler characteristic, resp.) of $\beta$.

Obviously both upper and lower filtrations are uniquely determined by $\mf$.  Hence we can stratify $\overline{\bm_X}(nC,\chi)$ by the filtration type.
Let $\overline{\bm_X}(nC,\chi)^{\beta_1,\cdots,\beta_m}$ ($\overline{\bm_X}(nC,\chi)_{\beta_1,\cdots,\beta_m}$, resp.) with $\beta_1,\cdots,\beta_m\in K(C)$ be the substack of $\overline{\bm_X}(nC,\chi)$ consists of sheaves with upper (lower, resp.) filtrations satisfying $[R_i]=\beta_i$ ($[Q_i]=\beta_i$, resp.).  Notice that $\overline{\bm_X}(nC,\chi)^{\beta_1,\cdots,\beta_m}$ ($\overline{\bm_X}(nC,\chi)_{\beta_1,\cdots,\beta_m}$, resp.) is not empty only if $r(\beta_i)\geq r(\beta_{i-1})$ ($r(\beta_i)\leq r(\beta_{i-1})$, resp.) for all $2\leq i\leq m$.  

Define
\[\overline{\bm_X}(nC,\chi)^{r_1,\cdots,r_m}:=\coprod_{\substack{r(\beta_i)=r_i\\ 1\leq i\leq m}}\overline{\bm_X}(nC,\chi)^{\beta_1,\cdots,\beta_m},\]
\[\overline{\bm_X}(nC,\chi)_{r'_1,\cdots,r'_m}:=\coprod_{\substack{r(\beta'_i)=r'_i\\ 1\leq i\leq m}}\overline{\bm_X}(nC,\chi)_{\beta'_1,\cdots,\beta'_m};\]
\[\bm_X(nC,\chi)^{r_1,\cdots,r_m}:=\bm_X(nC,\chi)\cap \overline{\bm_X}(nC,\chi)^{r_1,\cdots,r_m};\]
\[\bm_X(nC,\chi)_{r'_1,\cdots,r'_m}:=\bm_X(nC,\chi)\cap\overline{\bm_X}(nC,\chi)_{r'_1,\cdots,r'_m}.\]
By Proposition \ref{2Fil} we have
\[\overline{\bm_X}(nC,\chi)^{r_1,\cdots,r_m}=\overline{\bm_X}(nC,\chi)_{r_m,\cdots,r_1};\]
$$\bm_X(nC,\chi)^{r_1,\cdots,r_m}=\bm_X(nC,\chi)_{r_m,\cdots,r_1}.$$
The following lemma is straightforward.
\begin{lemma}\label{exfil}Let $\mf\in\overline{\bm_X}(nC,\chi)^{r_1,\cdots,r_m}$, and let $\mf',\mf''$ lie in the following two exact sequences over $X$
\[0\ra \mf\ra\mf'\ra T'\ra 0,~~0\ra T''\ra \mf''\ra \mf\ra0 ;\]
where $T',T''$ are zero-dimensional sheaves of $\mo_{nC}$-module.  Then we have
\[\mf'~(\mf'',~\text{resp.})\in \overline{\bm_X}(nC,\chi'(\chi'',\text{ resp.}))^{r_1,\cdots,r_m},\]
where $\chi'=\chi+\text{length}(T')$ and $\chi''=\chi+\text{length}(T'')$.
\end{lemma}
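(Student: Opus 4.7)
The plan is to exploit the fact that the upper filtration invariants $(m, r_1, \ldots, r_m)$ of any sheaf in $\overline{\bm_X}(nC,-)$ are determined entirely by its stalk at the generic point $\eta_C$ of $C$, and that modifications by zero-dimensional sheaves do not alter that stalk. Concretely, since $T'$ and $T''$ are zero-dimensional, $T'_{\eta_C}=T''_{\eta_C}=0$, so localizing the two short exact sequences at $\eta_C$ yields $\mo_{X,\eta_C}$-module isomorphisms $\mf_{\eta_C}\cong \mf'_{\eta_C}\cong \mf''_{\eta_C}$.

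First I would verify the ambient membership. Additivity of Euler characteristic in the two exact sequences gives $\chi(\mf')=\chi+\mathrm{length}(T')$ and $\chi(\mf'')=\chi+\mathrm{length}(T'')$. Because $\mf$, $T'$ and $T''$ are all $\mo_{nC}$-modules, so are $\mf'$ and $\mf''$, and the generic length along $C$ is preserved since the zero-dimensional pieces vanish at $\eta_C$; hence $\mf',\mf''\in\overline{\bm_X}(nC,-)$ with the stated Euler characteristics. Next I would read off the filtration data locally at $\eta_C$. The ring $A:=\mo_{X,\eta_C}$ is a DVR with uniformizer $\delta_C$, and by the structure theorem for finitely generated torsion $A$-modules there is a unique partition $n_1\ge n_2\ge\cdots\ge n_s$ with $\sum n_j=n$ such that $\mf_{\eta_C}\cong\bigoplus_j A/(\delta_C^{n_j})$.

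Using the description $\mf^i=\delta_C^{m-i}\mf$ and $R_i=\mf^i/\mf^{i-1}$ from Proposition \ref{2Fil}, one reads off at $\eta_C$ that $m=n_1$ is the largest part and that $r_i=\#\{j:n_j\ge i\}$ is the conjugate partition; in particular both $m$ and the $r_i$ are recovered from the generic stalk alone. Applying this to $\mf'$ and $\mf''$, which share their stalk at $\eta_C$ with $\mf$, returns the same $(m,r_1,\ldots,r_m)$, which is precisely the claim. The only part that warrants care is checking that $m$ and the ranks $r_i$ really are generic invariants, i.e.\ that $\delta_C^k\mf$ and $\delta_C^k\mf'$ (resp.\ $\delta_C^k\mf''$) have the same stalk at $\eta_C$ for every $k$; this is immediate from $T'_{\eta_C}=T''_{\eta_C}=0$, which is why the author calls this lemma straightforward.
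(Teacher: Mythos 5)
The paper offers no proof of this lemma (it is declared ``straightforward''), so your proposal has to stand on its own. Your central idea --- that the type $(r_1,\dots,r_m)$ should be read off from the stalk at the generic point $\eta_C$, which zero-dimensional modifications do not change --- is surely the intended mechanism, and your computation of the ranks from the partition of $\mf_{\eta_C}$ over the DVR $\mo_{X,\eta_C}$ is essentially right (up to a reversed indexing: since the surjections $R_i(-C)\twoheadrightarrow R_{i-1}$ force the $r_i$ to be nondecreasing, one has $r_i=\#\{j:n_j\geq m-i+1\}$, the conjugate partition read backwards; this slip is harmless because it affects $\mf$, $\mf'$, $\mf''$ identically). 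However, two of your intermediate claims are false as stated, and they sit exactly where the content of the lemma lies. First, ``because $\mf$, $T'$ and $T''$ are all $\mo_{nC}$-modules, so are $\mf'$ and $\mf''$'' is not true: an extension of two sheaves killed by $\delta_C^n$ is in general only killed by $\delta_C^{2n}$. Second, the number of steps $m$ of the upper filtration is \emph{not} a generic-stalk invariant on $\overline{\bm_X}$: by Remark \ref{sup}, $m=\max\{k:\delta_C^{k-1}\mf\neq 0\}$, and for a non-pure sheaf $\delta_C^{k-1}\mf''$ can be a nonzero skyscraper even though it vanishes at $\eta_C$. Since adjoining $T''$ as a subsheaf destroys purity, this is precisely the situation the lemma is about, so the step you call ``immediate'' is the one that needs an argument.

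Both failures occur in one example. Let $C=\{x=0\}$ in local coordinates at a point $P$, take $n=1$, $\mf=\mo_C=\mo_X/(x)$ (so $m=1$, $r_1=1$) and $T''=(x)/(x^2,xy)$, a length-one skyscraper at $P$ killed by $x$, hence an $\mo_C$-module. Then $\mf'':=\mo_X/(x^2,xy)$ sits in $0\to T''\to\mf''\to\mf\to 0$ and satisfies every hypothesis of the lemma, yet it is not an $\mo_C$-module ($x\cdot\mf''=T''\neq 0$), and its upper filtration has $m=2$ with factors of ranks $(0,1)$ rather than $(1)$. So your proof does not go through as written, and indeed the statement read literally has counterexamples; the repair is to note that in every application in the paper the sheaves $\mf'$, $\mf''$ are a priori $\mo_{nC}$-modules arising as subsheaves or quotients of \emph{pure} sheaves on $nC$, so that each $\delta_C^k\mf$ is either zero or $1$-dimensional, $m$ is then detected at $\eta_C$, and your generic-stalk reduction becomes valid (alternatively, one declares the strata to be defined by the generic stalk, discarding rank-zero factors). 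You should add this purity/support hypothesis and the accompanying argument; without it there is a genuine gap.
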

\begin{rem}\label{tfuF}Use the same notations as in Proposition \ref{2Fil}.  If $\mf\in\bm_X(nC,\chi)$, i.e. $\mf$ is pure, then the factors $Q_i$ of the lower filtration are torsion free over $C$ while the factors $R_i$ of the upper filtration may still contain torsion.
We can take another upper filtration
\[0=\widetilde{\mf}^0\subsetneq \widetilde{\mf}^1\subsetneq\cdots\subsetneq \widetilde{\mf}^m=\mf,\]
such that $\widetilde{R}_i:=\widetilde{\mf}^{i}/\widetilde{\mf}^{i-1}$ are torsion free sheaves on $C$ and every $\widetilde{\mf}^i$ is an extension of some zero dimensional sheaf by $\mf^i$ in the upper filtration.  Actually $\widetilde{R}_i$ is the maximal torison-free quotient of $\widetilde{\mf}_i\otimes\mo_C$ and $r(\widetilde{R}_i)=r(R_i)$ for all $1\leq i\leq m$ by Lemma \ref{exfil}.

There are also morphisms $\widetilde{g}^i_{\mf}:\widetilde{R}_{i}(-C)\rightarrow \widetilde{R}_{i-1}$ induced by $\mf$ for all $2\leq i\leq m$.  But
$\widetilde{g}^i_{\mf}$ is not necessary surjective.  We call this filtration \textbf{the torsion-free upper filtration of $\mf$}.
\end{rem}

\section{The case with reduced curve smooth.}
In this section, we prove our main theorem for $C$ a smooth curve.
\begin{prop}\label{smc}Let $C$ be a smooth curve, then we have
\[\dim \overline{\bm_X}(nC,\chi)^{r_1,\cdots,r_m}\leq \frac{n^2C^2}2+\frac{C.K_X}2\left(\displaystyle{\sum_{j=1}^m}r_j^2\right).\]
In particular, 
$$\dim \overline{\bm_X}(nC,\chi)\begin{cases}\leq \frac{n^2C^2}2+\frac{nC.K_X}2=g_{nC}-1,\text{ if }C.K_X\leq 0\\  \\ =\frac{n^2C^2}2+\frac{n^2C.K_X}2=n^2(g_{C}-1),\text{ if }C.K_X>0\end{cases}.$$
\end{prop}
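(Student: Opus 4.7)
My plan is to induct on $m$ via the upper filtration from Proposition~\ref{2Fil}. The base case $m=1$ identifies the stratum with the stack of rank-$n$ coherent sheaves on the smooth curve $C$ of fixed Euler characteristic; since $\Ext^2_{\mo_C}$ vanishes on a curve, this stack is smooth of dimension $-\chi_C([R_1],[R_1])=n^2(g_C-1)$, matching the right-hand side when $r_1=n$.

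For the inductive step I consider the forgetful morphism
\[
\pi\colon \overline{\bm_X}(nC,\chi)^{r_1,\ldots,r_m}\longrightarrow \bigsqcup_{\chi'}\overline{\bm_X}\bigl(n'C,\chi'\bigr)^{r_1,\ldots,r_{m-1}},\qquad \mf\longmapsto \mf^{m-1},
\]
with $n'=n-r_m$, sending $\mf$ to the penultimate term of its upper filtration. Over a point $\mf^{m-1}$ with top quotient $R_{m-1}=\mf^{m-1}\otimes\mo_C$, the fiber parametrizes triples $(R_m,g^m,\mf)$: a rank-$r_m$ coherent sheaf $R_m$ on $C$, a surjection $g^m\colon R_m(-C)\twoheadrightarrow R_{m-1}$, and an extension $0\to\mf^{m-1}\to\mf\to R_m\to 0$ on $X$ whose connecting map induces $g^m$. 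Varying $R_m$ in its moduli on the smooth curve $C$ contributes $r_m^2(g_C-1)$, and for fixed $R_m$ the extension stack has dimension $\dim\Ext^1_{\mo_X}(R_m,\mf^{m-1})-\dim\Hom_{\mo_X}(R_m,\mf^{m-1})$.

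The essential calculation uses the Koszul resolution $0\to\mo_X(-C)\to\mo_X\to\mo_C\to 0$. For coherent sheaves $A,B$ on $C$ this gives the decomposition
\[
\Ext^p_{\mo_X}(A,B)\cong \Ext^p_{\mo_C}(A,B)\oplus \Ext^{p-1}_{\mo_C}\bigl(A(-C),B\bigr),
\]
from which, by induction along the upper filtration of $\mf^{m-1}$ and Riemann--Roch on $C$, one gets $\chi_X([R_m],[\mf^{m-1}])=-r_m(n-r_m)C^2$. Granting a fiber bound of $r_m^2(g_C-1)+r_m(n-r_m)C^2$, a direct algebraic check using $g_C-1=(C^2+C.K_X)/2$ verifies
\[
\tfrac{(n-r_m)^2C^2}{2}+\tfrac{C.K_X}{2}\sum_{j<m}r_j^2+r_m^2(g_C-1)+r_m(n-r_m)C^2 = \tfrac{n^2C^2}{2}+\tfrac{C.K_X}{2}\sum_{j=1}^m r_j^2,
\]
closing the induction.

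The main obstacle is establishing the fiber bound itself. The Euler-characteristic identity yields $\dim\Ext^1_X-\dim\Hom_X=r_m(n-r_m)C^2+\dim\Ext^2_{\mo_X}(R_m,\mf^{m-1})$, so the naive extension count loses the $\Ext^2$-term. By Serre duality on the surface and the fact that $R_m(K_X)$ is supported on $C$, this $\Ext^2$ is dual to $\Hom_{\mo_C}(R_{m-1},R_m\otimes K_X|_C)$; combined with the adjunction $K_X|_C=K_C(-C)$ and the surjectivity of $g^m$, this $\Hom$ should be controlled by data already counted in the moduli of $R_m$, producing the needed cancellation. If necessary I would pass to the torsion-free upper filtration of Remark~\ref{tfuF} so that Riemann--Roch on $C$ applies cleanly to locally free $R_i$. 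Finally the ``in particular'' statement follows by optimization over rank sequences with $\sum r_j=n$: when $C.K_X\leq 0$ the coefficient is nonpositive and the bound is maximized at $\sum r_j^2=n$ (all $r_j=1$), giving $\frac{n^2C^2+nC.K_X}{2}=g_{nC}-1$; when $C.K_X>0$ it is maximized at $m=1$, $r_1=n$, giving $n^2(g_C-1)$, attained since the $m=1$ stratum is smooth of exactly this dimension.
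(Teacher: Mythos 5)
Your overall architecture is the same as the paper's: stratify by the classes of the upper-filtration quotients, peel off the top quotient $R_m$, count extensions of $R_m$ by $\mf^{m-1}$, and observe that the naive count $\dim\Ext^1_{\mo_X}-\dim\Hom_{\mo_X}$ overshoots by $\dim\Ext^2_{\mo_X}(R_m,\mf^{m-1})=\dim\Hom_{\mo_C}(R_{m-1},R_m(K_X))$. You have correctly located the entire difficulty of the proposition in that excess term. The problem is that you then dispose of it with ``this $\Hom$ should be controlled by data already counted in the moduli of $R_m$, producing the needed cancellation'' --- and that sentence is the whole theorem. As stated, your fiber bound $r_m^2(g_C-1)+r_m(n-r_m)C^2$ is not true fiber by fiber: the fiber of your forgetful map over $\mf^{m-1}$ genuinely has the extra $\dim\Hom_{\mo_C}(R_{m-1},R_m(K_X))$ dimensions, and this quantity is strictly positive in many cases (already for $m=2$, $R_1,R_2$ line bundles with $R_2\otimes K_X|_C\otimes R_1^{-1}$ effective). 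The saving cannot come from the extension space; it has to come from the \emph{base}, i.e.\ from the fact that the tuple $(R_m,\dots,R_1)$ is constrained to admit surjections $R_i(-C)\twoheadrightarrow R_{i-1}$, so it sweeps out a locus of codimension governed by $\dim\Hom_{\mo_C}(R_i,R_{i-1}(C))$ inside $\bcoh_{\beta_m}\times\cdots\times\bcoh_{\beta_1}$. Your count ``varying $R_m$ contributes $r_m^2(g_C-1)$'' ignores exactly this constraint, so the compensation never enters your bookkeeping.

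The paper makes this precise in Lemma \ref{dimet}: the image of $\pi_s$ lies in the image of the chain stack $\widehat{\bcoh}_{\underline{\beta}}$ (surjective chains $\ms_m\twoheadrightarrow\ms_{m-1}(C)\twoheadrightarrow\cdots$), whose dimension is computed via the isomorphism $\Phi$ with the filtration stack $\widetilde{\bcoh}_{\underline{\alpha}}$ and whose fibers over the product of $\bcoh_{\beta_i}$ have dimension $\sum_i\dim\Hom_{\mo_C}(\ms_i,\ms_{i-1}(C))$. Adding the fiber estimate of $\pi_s$ (your extension count, with the $\Ext^2$ term kept) to the base estimate, the adjunction $K_C=(K_X+C)|_C$ and Serre duality on $C$ give $\dim\Hom_{\mo_C}(R_i,R_{i+1}(K_X))=\dim\Ext^1_{\mo_C}(R_{i+1},R_i(C))$, which combines with $-\dim\Hom_{\mo_C}(R_{i+1},R_i(C))$ into $-\chi_C(R_{i+1},R_i(C))$, a Riemann--Roch quantity; this is the cancellation you are missing, and it is a global bookkeeping identity across the whole chain, not a fiberwise vanishing. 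Your final optimization over $(r_1,\dots,r_m)$ with $\sum r_j=n$ (minimizing or maximizing $\sum r_j^2$ according to the sign of $C.K_X$) is correct and matches the paper, but the core inequality remains unproved in your write-up.
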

\begin{proof}
Since the possible choices of $(\beta_1,\cdots,\beta_m)$ such that $r(\beta_j)=r_j,~j=1,\cdots,m$ form a discrete set.  Hence Proposition \ref{smc} follows straightforward from the following lemma.\end{proof}

\begin{lemma}\label{smcb}For every $(\beta_1,\cdots,\beta_m)\in K(C)^m$, we have
\[\dim \overline{\bm_X}(nC,\chi)^{\beta_1,\cdots,\beta_m}\leq \frac{n^2C^2}2+\frac{C.K_X}2\left(\displaystyle{\sum_{j=1}^m}r(\beta_j)^2\right).\]
\end{lemma}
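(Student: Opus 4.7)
The plan is to proceed by induction on $m$ via the forgetful morphism $\pi: \overline{\bm_X}(nC,\chi)^{\beta_1,\ldots,\beta_m} \to \overline{\bm_X}((n-r_m)C,\chi-\chi(\beta_m))^{\beta_1,\ldots,\beta_{m-1}}$ defined by $\mf \mapsto \mf^{m-1}$. The base case $m=1$ is immediate: $\overline{\bm_X}(nC,\chi)^{\beta_1}$ equals the moduli stack $\bcoh_C(\beta_1)$ of sheaves of class $\beta_1$ on the smooth curve $C$, whose dimension is $-\chi_C(\beta_1,\beta_1) = r_1^2(g_C-1)$; this matches the bound via the adjunction identity $g_C - 1 = (C^2 + C.K_X)/2$.

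For the inductive step, the fiber of $\pi$ over $\mf^{m-1}$ parametrizes extensions $0 \to \mf^{m-1} \to \mf \to R_m \to 0$ with $R_m\in\bcoh_C(\beta_m)$, subject to the open condition that the induced boundary map $R_m(-C) \to R_{m-1}$ is surjective (ensuring $\mf$ actually lies in the stratum rather than one with shorter upper filtration). Standard stack-theoretic dimension counting for extensions, the identity $\dim \Ext^1 - \dim \Hom = \dim \Ext^2 - \chi$, and the Riemann--Roch computation $\chi_X(R_m,\mf^{m-1}) = -r_m(n-r_m)C^2$ together give the fiber dimension as
\[
r_m^2(g_C-1) + r_m(n-r_m)C^2 + \dim \Ext^2_{\mo_X}(R_m,\mf^{m-1}).
\]
When $R_m$ is torsion-free, it is locally free on the smooth curve $C$, hence has projective dimension $1$ over $\mo_X$; consequently $\Ext^2_{\mo_X}(R_m,\mf^{m-1}) = 0$. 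Combining with the inductive bound on $\dim \overline{\bm_X}((n-r_m)C,\cdot)^{\beta_1,\ldots,\beta_{m-1}}$ and using $((n-r_m)+r_m)^2 = n^2$, one recovers exactly the claimed bound $\frac{n^2 C^2}{2} + \frac{C.K_X}{2}\sum_{j=1}^m r_j^2$.

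The main obstacle is the case when $R_m$ has torsion, where $\dim \Ext^2_{\mo_X}(R_m,\mf^{m-1})$ may be positive. My plan is to stratify $\bcoh_C(\beta_m)$ further by the length of the torsion part of $R_m$ and verify that the codimension of each torsion substratum in $\bcoh_C(\beta_m)$ cancels the jump in $\dim \Ext^2_{\mo_X}(R_m,\mf^{m-1})$ coming from the zero-dimensional summand. Alternatively, for pure $\mf$ one may apply the torsion-free upper filtration of Remark~\ref{tfuF}, whose factors are locally free on $C$, so that $\Ext^2$-vanishing holds at every inductive stage; non-pure sheaves $\mf \in \overline{\bm_X}$ can then be handled by viewing them as extensions of pure sheaves by zero-dimensional sheaves, which contribute only lower-order terms to the total dimension.
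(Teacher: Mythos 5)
Your overall skeleton (induction on the length $m$ of the upper filtration, counting extensions $0\to\mf^{m-1}\to\mf\to R_m\to0$ via $\dim\Ext^1-\dim\Hom=-\chi_X+\dim\Ext^2$, Riemann--Roch for $\chi_X(R_m,\mf^{m-1})$) matches the inductive step the paper uses for the fibers of its map $\pi_s$. But there is a genuine gap at the central point: your claim that $R_m$ torsion-free (hence locally free on the smooth curve $C$, hence of projective dimension $1$ over $\mo_X$) forces $\Ext^2_{\mo_X}(R_m,\mf^{m-1})=0$ is false. Projective dimension $1$ kills the \emph{local} Ext sheaf $\mathcal{E}xt^2_{\mo_X}$, but the global $\Ext^2$ survives as $H^1(C,\mathcal{E}xt^1_{\mo_X}(R_m,\mf^{m-1}))$. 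By Serre duality on $X$ one has $\Ext^2_{\mo_X}(R_m,\mf^{m-1})\cong\Hom_{\mo_X}(\mf^{m-1},R_m(K_X))^*\cong\Hom_{\mo_C}(R_{m-1},R_m(K_X))^*$, which is typically nonzero even for line bundles: e.g.\ for $\mf=\mo_{2C}$ one gets $\Ext^2_{\mo_X}(\mo_C,\mo_C(-C))\cong H^0(C,\omega_C)^*$, of dimension $g_C$. So the problem you relegate to the torsion case is already present in the locally free case, and neither of your proposed fixes (stratifying by torsion length, or passing to the torsion-free upper filtration) addresses it.

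The paper's proof resolves exactly this issue, and it is the step your proposal is missing. One keeps the term $\dim\Ext^2_{\mo_X}(R_m,\mf^{m-1})=\dim\Hom_{\mo_C}(R_{m-1},R_m(K_X))$ and cancels it against the constraint you mention but do not quantify: the tuple $(R_m,\dots,R_1)$ must admit surjections $R_i(-C)\twoheadrightarrow R_{i-1}$, so it lies in the image of the chain stack $\widehat{\bcoh}_{\underline{\beta}}$, whose dimension is smaller than $\dim\prod_i\bcoh_{\beta_i}$ by $\sum_{i}\dim\Hom_{\mo_C}(R_i,R_{i-1}(C))$ (Lemma \ref{dimet} (iii), (iv), together with the isomorphism $\Phi$ and the fiber count for $\pi_q$). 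Since $\mo_C(C+K_X)\cong\omega_C$ by adjunction, Serre duality on $C$ gives $\dim\Hom_{\mo_C}(R_{i-1},R_i(K_X))-\dim\Hom_{\mo_C}(R_i,R_{i-1}(C))=-\chi_C(R_i,R_{i-1}(C))$, a purely numerical quantity, and the Riemann--Roch computation then closes the induction. Without this cancellation your inductive inequality carries an uncontrolled positive error of size up to $\dim\Hom_{\mo_C}(R_{m-1},R_m(K_X))$ at each stage, and the stated bound does not follow.
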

Before proving Lemma \ref{smcb}, we need to define some stacks.  For any $\eta\in K(C)$, denote by $\bcoh_{\eta}$ the stack of coherent sheaves on $C$ of class $\eta$.  Let $\underline{\eta}=(\eta_1,\cdots,\eta_m)\in K(C)^{m}$, denote by $\widehat{\bcoh_{\underline{\eta}}}$ the stack of chains $\mc_{\bullet}$
\[\mc_{\bullet}:=[\xymatrix@C=1.2cm{\ms_m\ar@{->>}[r]^{d_m\quad}&\ms_{m-1}(C)\ar@{->>}[r]^{~d_{m-1}} &\cdots \ar@{->>}[r]^{d_2\qquad}& \ms_1((m-1)C) }],\]
where $\ms_i$ are coherent sheaves on $C$ of class $\eta_i$.
Two chains $\mc_{\bullet},\mc_{\bullet}'$ are isomorphic if we have the following commutative diagram with vertical arrows all isomorphisms
\[\xymatrix@C=1.2cm{\ms_m\ar@{->>}[r]^{d_m\quad}\ar[d]_{\cong} &\ms_{m-1}(C)\ar@{->>}[r]^{~d_{m-1}}\ar[d]_{\cong} &\cdots \ar@{->>}[r]^{d_2\qquad}& \ms_1((m-1)C)\ar[d]_{\cong}\\
\ms'_m\ar@{->>}[r]^{d'_m\quad}&\ms'_{m-1}(C)\ar@{->>}[r]^{~d'_{m-1}} &\cdots \ar@{->>}[r]^{d'_2\qquad}& \ms'_1((m-1)C) }.\]

Let $\widetilde{\bcoh}_{\underline{\eta}}$ be the stack of the pairs $(\mh,\mh_{\bullet})$, where $\mh$ is a coherent sheaf on $C$ of class $\sum_{j=1}^m\eta_j$ and where $\mh_{\bullet}$ is a filtration
\[\mh_1\subset \mh_2\subset\cdots\subset \mh_m=\mh\]
satisfying $[\mh_k]=\sum_{j=1}^k\eta_j$ for $k=1,\cdots,m$.

Define $\gamma_i:=\beta_m-\beta_{m-i}\otimes[\mo_X(C)]$, $\alpha_i:=\gamma_i-\gamma_{i-1}$ (with $\beta_0=\gamma_0=0$) and $\underline{\alpha}=(\alpha_1,\cdots,\alpha_m)$.  We have several natural maps as follows

\begin{equation}\label{nmaps}
\xymatrix{
\bcoh_{\beta_m}\times\cdots\times\bcoh_{\beta_1}&\widehat{\bcoh}_{\underline{\beta}}\ar[r]^{\Phi}\ar[l]_{\qquad\qquad\pi_m}&\widetilde{\bcoh}_{\underline{\alpha}}\ar[d]^{\pi_q}\\
\overline{\bm_X}(nC,\chi)^{\beta_1,\cdots,\beta_m}\ar[u]^{\pi_s}&&\bcoh_{\alpha_1}\times\cdots \times\bcoh_{\alpha_m}},
\end{equation}
where $\Phi$ ($\pi_m$, resp.) is defined by sending $[\xymatrix@C=0.5cm{\ms_m\ar@{->>}[r]^{d_m\quad}&\ms_{m-1}(C)\ar@{->>}[r]^{~d_{m-1}} &\cdots \ar@{->>}[r]^{d_2\qquad}& \ms_1((m-1)C) }]$ to $(\mh=\ms_m,\mh_i=\Ker d_{m-i+1}\circ\cdots\circ d_m)$ ($(\ms_m,\cdots,\ms_1)$, resp.),  $\pi_q$ is defined by sending $(\mh,\mh_{\bullet})$ to $(\mh_1,\mh_2/\mh_1,\cdots,\mh/\mh_{m-1})$, and finally $\pi_s$ is defined by sending $\mf$ to its factors $(R_m,\cdots,R_1)$ of the upper filtration.

\begin{lemma}\label{dimet}Let $\Phi,\pi_q,\pi_m,\pi_s,\underline{\beta},\underline{\alpha}$ be as in (\ref{nmaps}), then we have
\begin{enumerate}
\item[(i)] $\Phi$ is an isomorphism;
\item[(ii)] $\dim \pi_q^{-1}((\mh_1,\mh_2/\mh_1,\cdots,\mh/\mh_{m-1}))=-\displaystyle{\sum_{i<j}}\chi_C(\alpha_j,\alpha_i)$;
\item[(iii)] for every $(\ms_m,\cdots,\ms_1)\in \Ima(\pi_m)$
$$\dim \pi_m^{-1}((\ms_m,\cdots,\ms_1))=\displaystyle{\sum_{i=2}^m}\dim \Hom_{\mo_C}(\ms_i,\ms_{i-1}(C));$$
\item[(iv)] $\Ima(\pi_s)\subset\Ima(\pi_m)$;
\item[(v)] for every $(R_m,\cdots,R_1)\in \Ima(\pi_s)$
$$\dim \pi_s^{-1}((R_m,\cdots,R_1))\leq \displaystyle{\sum_{i<j}}r(\beta_i)r(\beta_j)C^2+\displaystyle{\sum_{i=1}^{m-1}}\dim\Hom_{\mo_C}(R_i,R_{i+1}(K_X)).$$
\end{enumerate}
\end{lemma}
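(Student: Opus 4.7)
The plan is to handle the five parts in order, as (iv)--(v) build on (i)--(iii). For (i), I would construct an explicit inverse $\Psi$ to $\Phi$: given $(\mh, \mh_\bullet) \in \widetilde{\bcoh}_{\underline{\alpha}}$, set $\ms_{m-i} := (\mh/\mh_i)(-iC)$ and let $d_{m-i+1}$ be the natural twisted quotient $\mh/\mh_{i-1} \twoheadrightarrow \mh/\mh_i$. The class identity $[\ms_{m-i}] = \beta_{m-i}$ follows by telescoping $[\mh_k/\mh_{k-1}] = \alpha_k$, and both compositions $\Phi \circ \Psi$, $\Psi \circ \Phi$ are visibly the identity. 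For (ii), the fiber of $\pi_q$ is the stack of iterated extensions of sheaves on the smooth curve $C$ with prescribed successive quotients; since $\Ext^2_{\mo_C} = 0$, the standard formula for such extension stacks on curves gives dimension $-\sum_{i<j} \chi_C(\alpha_j, \alpha_i)$. For (iii), with $(\ms_m, \ldots, \ms_1)$ fixed in $\Ima(\pi_m)$, the fiber is the (nonempty) open locus of surjective tuples in $\prod_{i=2}^m \Hom_{\mo_C}(\ms_i, \ms_{i-1}(C))$, of the claimed dimension. For (iv), Proposition \ref{2Fil} directly attaches to each $\mf$ the chain $\mc_\bullet \in \widehat{\bcoh}_{\underline{\beta}}$ built from the canonical surjections $g^i_\mf$, so $\pi_s(\mf) = \pi_m(\mc_\bullet) \in \Ima(\pi_m)$.

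For (v), the heart of the argument, factor $\pi_s = \pi_m \circ \psi$ where $\psi$ sends $\mf$ to the chain from (iv), and bound $\dim \psi^{-1}(\mc_\bullet)$ by parameterizing $\mf$ as an iterated extension $0 \to \mf^{i-1} \to \mf^i \to R_i \to 0$ on $X$ subject to the constraint that the $\delta_C$-induced quotient map matches the given $d_i$. The essential inputs are: the decomposition $\Ext^p_{\mo_X}(R_j, R_i) \cong \Ext^p_{\mo_C}(R_j, R_i) \oplus \Ext^{p-1}_{\mo_C}(R_j, R_i(C))$ for sheaves on the smooth curve $C \subset X$; Serre duality identifying $\Ext^2_{\mo_X}(R_j, R_i)$ with $\Hom_{\mo_C}(R_i, R_j(K_X))^\vee$ via the adjunction $K_X|_C = K_C(-C)$; the identity $\chi_X(R_j, R_i) = -r_i r_j C^2$ from Hirzebruch--Riemann--Roch for $1$-dimensional sheaves on a surface; and the chain-induced surjections $R_j \twoheadrightarrow R_{j-1}(C)$, used to telescope non-consecutive $\Hom_{\mo_C}(R_i, R_j(K_X))$ into consecutive ones. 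Combining these with the constraint-induced cancellation of the $\Hom_{\mo_C}(R_i, R_{i-1}(C))$ contribution against the $\pi_m$-fiber yields the bound $\sum_{i<j} r_i r_j C^2 + \sum_{i=1}^{m-1} \Hom_{\mo_C}(R_i, R_{i+1}(K_X))$.

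The main obstacle lies entirely in (v), with two delicate subtleties. First, one must verify that the natural map $\Ext^1_{\mo_X}(R_i, \mf^{i-1}) \to \Hom_{\mo_C}(R_i, R_{i-1}(C))$ extracting the induced $\delta_C$-action is surjective, so that imposing $d_i$ reduces the extension space by exactly $\dim \Hom_{\mo_C}(R_i, R_{i-1}(C))$ and cleanly cancels the $\pi_m$-fiber contribution. Second, one must rigorously telescope $\sum_{i<j} \Hom_{\mo_C}(R_i, R_j(K_X))$ down to $\sum_{i=1}^{m-1} \Hom_{\mo_C}(R_i, R_{i+1}(K_X))$; this uses the chain surjections and may need Remark \ref{tfuF}'s torsion-free upper filtration when the $R_i$'s carry torsion, so as to control the torsion components using the purity of $\mf$.
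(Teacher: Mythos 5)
Parts (i)--(iv) of your proposal are fine and agree with the paper, which treats (i) and (iv) as obvious, refers to \cite{Sch} for (ii), and proves (iii) exactly as you do. The divergence, and the problems, are in (v).

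The first gap is the automorphism correction. The paper peels off the top factor via $\mf\mapsto (R_m,\mf^{m-1})$ and bounds that fiber not by $\dim\Ext^1_{\mo_X}(R_m,\mf^{m-1})$ but by $\dim\Ext^1_{\mo_X}(R_m,\mf^{m-1})-\dim\Hom_{\mo_X}(R_m,\mf^{m-1})$; the subtracted term comes from an orbit--stabilizer argument ($\Aut_{\mo_C}(R_m)\times\Aut_{\mo_X}(\mf^{m-1})$ acts transitively on the extension classes with middle term $\mf$, and $\Hom_{\mo_X}(R_m,\mf^{m-1})$ lies in the kernel of $\Aut_{\mo_X}(\mf)\to\Aut_{\mo_C}(R_m)\times\Aut_{\mo_X}(\mf^{m-1})$). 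This is precisely what converts $\dim\Ext^1$ into $-\chi_X+\dim\Ext^2$. Your iterated-extension count never subtracts $\dim\Hom_{\mo_X}(R_i,\mf^{i-1})$, which is in general nonzero (any $\mo_X$-map out of the $\mo_C$-module $R_i$ lands in the $\delta_C$-torsion of $\mf^{i-1}$, a nonzero sheaf on $C$); the ``constraint-induced cancellation'' you invoke only removes the $\Hom_{\mo_C}(R_i,R_{i-1}(C))$ attached to the chain maps $d_i$, not this term. As written you only reach $\sum_i\dim\Ext^1_{\mo_X}(R_i,\mf^{i-1})$, which is weaker than the claimed bound.

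The second gap is the telescoping. Decomposing $\Ext^2_{\mo_X}$ into pairwise terms does produce non-consecutive $\Hom_{\mo_C}(R_i,R_j(K_X))$, but the mechanism you propose cannot remove them: the chain surjections $R_j(-C)\twoheadrightarrow R_{j-1}$ induce maps on $\Hom$ that go in the wrong direction (they bound $\Hom_{\mo_C}(R_{j-1},R_j'(K_X-C))$ by larger-index terms, not vice versa) and shift the twist by $C$ at each step, so they do not collapse $\Hom_{\mo_C}(R_i,R_j(K_X))$ onto the consecutive term. The paper avoids the issue entirely by keeping $\mf^{m-1}$ whole: Serre duality on $X$ gives $\Ext^2_{\mo_X}(R_m,\mf^{m-1})^{\vee}\cong\Hom_{\mo_X}(\mf^{m-1},R_m(K_X))$, and since $R_m(K_X)$ is an $\mo_C$-module every such map factors through the maximal $\mo_C$-quotient $\mf^{m-1}\otimes\mo_C=R_{m-1}$, so only the single consecutive term $\Hom_{\mo_C}(R_{m-1},R_m(K_X))$ ever appears; induction on $m$ then finishes (v). In particular the detour through $\widehat{\bcoh}_{\underline{\beta}}$, the constraint matching the $d_i$, and the surjectivity of $\Ext^1_{\mo_X}(R_i,\mf^{i-1})\to\Hom_{\mo_C}(R_i,R_{i-1}(C))$ that you flag as the main worry are all unnecessary for (v) --- the interaction with the $\pi_m$-fibers only enters later, in the proof of Lemma \ref{smcb}.
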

\begin{proof}(i) is obvious.

(ii) is analogous to Proposition 3.1 (ii) in \cite{Sch}, hence we omit the proof here and refer to \cite{Sch}.

(iii) It is easy to see that $\dim \pi_m^{-1}((\ms_m,\cdots,\ms_1))=\displaystyle{\sum_{i=2}^m}\dim \Hom_{\mo_C}(\ms_i,\ms_{i-1}(C))^{sur}$ where $\Hom_{\mo_C}(\ms_i,\ms_{i-1}(C))^{sur}$ is the subset of $\Hom_{\mo_C}(\ms_i,\ms_{i-1}(C))$ consisting of surjective maps.  But according to semicontinuity we have
\[\dim \Hom_{\mo_C}(\ms_i,\ms_{i-1}(C))^{sur}=\dim \Hom_{\mo_C}(\ms_i,\ms_{i-1}(C))\]
for any $(\ms_m,\cdots,\ms_1)\in \Ima(\pi_m)$.

(iv) is also obvous.

We prove (v) by induction on $m$.  For $m=1$ there is nothing to prove.

For $m\geq 2$, we have the commutative diagram
\[\xymatrix{\overline{\bm_X}(nC,\chi)^{\beta_1,\cdots,\beta_m}\ar[r]^{\pi''_s\qquad\qquad}\ar[rd]_{\pi_s}& \bcoh_{\beta_m}\times \overline{\bm_X}((n-r(\beta_m))C,\chi)^{\beta_1,\cdots,\beta_{m-1}}\ar[d]^{\pi'_s\otimes Id_{\bcoh_{\beta_m}}}\\
&\bcoh_{\beta_m}\times\cdots\times\bcoh_{\beta_1}},\]
where $\pi''_s$ is defined by sending $\mf$ to $(R_m,\mf^{m-1})$ with $0=\mf^0\subsetneq \mf^1\subsetneq\cdots\subsetneq \mf^m=\mf$ the upper filtration.
We can get $\mf$ as an extension of $R_m$ by $\mf^{m-1}$
\[0\ra\mf^{m-1}\ra \mf\ra R_m\ra 0.\]

Notice that every element $\sigma\in \Aut_{\mo_X}(\mf)$ induces an element in $\Aut_{\mo_X}(R_m)\times \Aut_{\mo_X}(\mf^{m-1})$ because $R_m\cong \mf\otimes\mo_C$.  Moreover $\Aut_{\mo_X}(R_m)\cong \Aut_{\mo_C}(R_m)$.  Hence we have
\[\Aut_{\mo_X}(\mf)\xrightarrow{f_1}\Aut_{\mo_C}(R_m)\times\Aut_{\mo_X}(\mf^{m-1})\]
and $\Hom_{\mo_X}(R_m,\mf^{m-1})\subset \Ker(f_1)$.  Denote by $\Ext^1_{\mo_X}(R_m,\mf^{m-1})_{\mf}$ the subset of $\Ext^1_{\mo_X}(R_m,\mf^{m-1})$ consists of extensions with middle term $\mf$, then $\Aut_{\mo_C}(R_m)\times\Aut_{\mo_X}(\mf^{m-1})$ acts on $\Ext^1_{\mo_X}(R_m,\mf^{m-1})_{\mf}$ transitively with stabilizer $\Ima(f_1)$.  Therefore
\begin{eqnarray}\label{indest}
\dim (\pi''_s)^{-1}((R_m,\mf^{m-1}))&\leq& \dim \Ext^1_{\mo_X}(R_m,\mf^{m-1})-\dim \Hom_{\mo_X}(R_m,\mf^{m-1})\nonumber\\
&=&-\chi_{X}(R_m,\mf^{m-1})+\dim \Ext^2_{\mo_X}(R_m,\mf^{m-1})\nonumber
\end{eqnarray}
By Serre duality $$\dim \Ext^2_{\mo_X}(R_m,\mf^{m-1})=\dim \Hom_{\mo_X}(\mf^{m-1},R_m(K_X)).$$  Since $R_m$ is a sheaf of $\mo_C$-module,  $\mf^{m-2}\subset \Ker(g),~\forall~g\in \Hom_{\mo_X}(\mf^{m-1},R_m(K_X))$.  Hence
\begin{eqnarray}
\dim \Hom_{\mo_X}(\mf^{m-1},R_m(K_X))&=&\dim \Hom_{\mo_X}(R_{m-1},R_m(K_X))\nonumber\\
&=&\dim \Hom_{\mo_C}(R_{m-1},R_m(K_X))\nonumber
\end{eqnarray}
By Riemann-Roch $\chi_{X}(R_m,\mf^{m-1})=-\displaystyle{\sum_{j=1}^{m-1}}r(\beta_j)r(\beta_m)C^2$. Therefore
\[\dim (\pi''_s)^{-1}((R_m,\mf^{m-1}))\leq \sum_{j=1}^{m-1}r(\beta_j)r(\beta_m)C^2+\dim \Hom_{\mo_C}(R_{m-1},R_m(K_X))\]
On the other hand
\[\dim \pi_s^{-1}((R_m,\cdots,R_1))\leq \dim (\pi'_s)^{-1}((R_{m-1},\cdots,R_1))+\dim (\pi''_s)^{-1}((R_m,\mf^{m-1}))\]
We get (v) by applying induction assumption to $\mf^{m-1}$.
\end{proof}

\begin{proof}[Proof of Lemma \ref{smcb}]As $C$ is smooth, $\dim\bcoh(\alpha)=-\chi_C(\alpha,\alpha)=(1-g_C)r(\alpha)^2$ for any $\alpha\in K(C)$ such that $\bcoh(\alpha)$ is not empty.  Combine (i)-(v) in Lemma \ref{dimet} we have
\begin{eqnarray}\dim\overline{\bm_X}(nC,\chi)^{\beta_1,\cdots,\beta_m}&\leq& \sum_{i<j}r(\beta_i)r(\beta_j)C^2+\sum_{i=1}^{m-1}\dim\Hom_{\mo_C}(R_i,R_{i+1}(K_X))\nonumber\\
&&-\sum_{i\leq j}\chi_C(\alpha_j,\alpha_i)-\sum_{i=2}^m\dim \Hom_{\mo_C}(R_i,R_{i-1}(C))\nonumber
\end{eqnarray}
By Serre duality on $C$, we have $\dim\Hom_{\mo_C}(R_i,R_{i+1}(K_X))=\dim\Ext^1_{\mo_C}(R_{i+1},R_{i}(C))$ since $\mo_C(C+K_X)$ is the canonical line bundle on $C$.  Therefore we have
\begin{equation}\label{es2}\dim\overline{\bm_X}(nC,\chi)^{\beta_1,\cdots,\beta_m}\leq \sum_{i<j}r(\beta_i)r(\beta_j)C^2-\sum_{i\leq j}\chi_C(\alpha_j,\alpha_i)-\sum_{i=2}^m\chi_{C}(R_i,R_{i-1}(C))
\end{equation}
It is easy to see that $\alpha_i=[R_{m-i+1}((i-1)C)]-[R_{m-i}(iC)]$.  Therefore from (\ref{es2}) we have
\begin{eqnarray}\label{es3}\dim\overline{\bm_X}(nC,\chi)^{\beta_1,\cdots,\beta_m}&\leq& \sum_{i<j}r(\beta_i)r(\beta_j)C^2-\sum_{i=1}^m\chi_C(\alpha_i,\alpha_i)-\sum_{i=1}^{m-1}\chi_{C}(R_i,R_i)\nonumber\\
&&-\sum_{i=2}^m(\chi_{C}(R_i,R_{i-1}(C))+\chi_{C}(R_{i-1}(C),R_i))
\end{eqnarray}
Because we have
\[\chi_C(\alpha,\beta)+\chi_C(\beta,\alpha)=2(1-g_C)r(\alpha)r(\beta),~\forall~\alpha,\beta\in K(C),\]
we get Lemma \ref{smcb} from (\ref{es3}) by a direct computation.

\end{proof}
\begin{rem}\label{nofin}Although $\overline{\bm_X}(nC,\chi)$ has finite dimension, it is not of finite type and actually contains infinite many connected components.  In general even $\bm_X(nC,\chi)$ is only locally of finite type.  However, it will be an interesting question to ask whether the p-reduction of $\bm_X(nC,\chi)$ to a finite field $\mathbb{F}^q$ is of finite volume?
\end{rem}
\begin{rem}\label{locfree}If $\mf$ is a locally free sheaf of $\mo_{nC}$-module, then $R_m$ is torsion-free of rank 1 and hence $R_{m-i}\cong R_{m}(-iC)$ for all $i=1,\cdots, m-1$.  Thus we have
\[\chi(\mf)=\sum_{j=1}^m\chi(R_j)=m\chi(R_m)-\frac{m(m-1)}2C^2.\]
Therefore if $m\not|~2\chi$, there is no locally free sheaf of $\mo_{nC}$-module in $\bm_X(nC,\chi)$.
\end{rem}
At the end of the section, we would like to state a result for extensions on $C\cong \p^1$ which generalizes Lemma 5.2 in \cite{Yuan4}.  We won't need Lemma \ref{p1et} in the rest of the paper, the reader who only concerns the main theorem can also skip it.
\begin{lemma}\label{p1et}Let $C\cong \p^1$.  Take an exact sequence on $X$
\begin{equation}\label{mide}0\ra\mo_{C}(s_1)\ra \me\ra\mo_C(s_2)\ra0.\end{equation}
If $s_1<s_2-C^2$, then $\me$ is a locally free sheaf of rank 2 on $C$ and hence splits into direct sum of two line bundles.
\end{lemma}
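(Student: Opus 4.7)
The plan is to reduce everything to an extension of line bundles on the smooth curve $C \cong \p^1$: once I show that $\me$ is actually a sheaf of $\mo_C$-modules (not merely of $\mo_{2C}$-modules), it is torsion-free of rank $2$ on $\p^1$, hence locally free, and Grothendieck's splitting theorem decomposes it as a direct sum of two line bundles. Thus the entire content of the lemma is the claim that $s_1 < s_2 - C^2$ forces the $\mo_X$-module structure on $\me$ to descend to $\mo_C$.

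To make this precise, I would compute the local Ext sheaves on $X$ using the locally free resolution $0 \to \mo_X(-C) \to \mo_X \to \mo_C \to 0$. Applying $R\Hom_{\mo_X}(-, \mo_C(s_1))$ gives the two-term complex $[\mo_C(s_1) \xrightarrow{\cdot \delta_C} \mo_C(s_1)(C)]$ whose differential vanishes because $\delta_C$ restricts to zero on $C$. Using the adjunction identity $\mo_X(C)|_C = \mo_C(C^2)$ and twisting by $\mo_C(-s_2)$, I read off
$$\mext^1_{\mo_X}(\mo_C(s_2), \mo_C(s_1)) \;=\; \mo_C(s_1 - s_2 + C^2),$$
while the parallel calculation gives the local Hom sheaf $\mo_C(s_1 - s_2)$.

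The local-to-global spectral sequence then yields the five-term exact sequence
$$0 \to H^1(C, \mo_C(s_1-s_2)) \to \Ext^1_{\mo_X}(\mo_C(s_2), \mo_C(s_1)) \xrightarrow{\psi} H^0(C, \mo_C(s_1-s_2+C^2)).$$
The hypothesis $s_1 < s_2 - C^2$ kills the target of $\psi$, so every class in $\Ext^1_{\mo_X}(\mo_C(s_2), \mo_C(s_1))$ lifts from $H^1(C, \mo_C(s_1-s_2)) = \Ext^1_{\mo_C}(\mo_C(s_2), \mo_C(s_1))$; equivalently, $\me$ is an $\mo_C$-module, and the opening reduction finishes the proof. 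The one point requiring care is identifying $\psi$ as the obstruction to the $\mo_X$-module structure descending to $\mo_C$, so that its vanishing really delivers what is needed, but this is the standard interpretation of the edge map in local-to-global for $\Ext^1$.
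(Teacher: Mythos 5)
Your proof is correct, but it takes a genuinely different route from the paper's. The paper reduces the lemma to the numerical equality $\dim\Ext^1_{\mo_C}(\mo_C(s_2),\mo_C(s_1))=\dim\Ext^1_{\mo_X}(\mo_C(s_2),\mo_C(s_1))$, which it verifies by computing the left side via Serre duality on $\p^1$ and the right side via Riemann--Roch and Serre duality on $X$, with a case split on the sign of $C.K_X$ (equivalently of $C^2$, since $C^2+C.K_X=-2$); equality plus the injectivity of the natural map $\Ext^1_{\mo_C}\to\Ext^1_{\mo_X}$ then forces every $\mo_X$-extension to be an $\mo_C$-extension. You instead run the local-to-global spectral sequence, identifying the cokernel of that natural map inside $H^0(C,\mathcal{E}xt^1_{\mo_X}(\mo_C(s_2),\mo_C(s_1)))=H^0(\p^1,\mo_{\p^1}(s_1-s_2+C^2))$, which vanishes exactly under the hypothesis $s_1<s_2-C^2$. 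This buys you two things: no case analysis, and a conceptual explanation of where the threshold $s_2-C^2$ comes from (the degree of the normal-bundle twist $N_{C/X}\otimes\mo_C(s_1-s_2)$ going negative); the paper's computation is more elementary but makes the bound look like an accident of arithmetic. The one point you flag --- identifying $\psi$ --- is handled most cleanly not by matching edge maps but by noting that $\ker\psi$ consists precisely of the locally split extensions, and a locally split extension of $\mo_C$-modules is locally a direct sum of $\mo_C$-modules, hence annihilated by $\delta_C$, hence itself an $\mo_C$-module; from there both arguments conclude identically (torsion-free of rank $2$ on $\p^1$, so locally free, so split by Grothendieck).
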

\begin{proof}We only need to show the following equality for all $s_1<s_2-C^2$
\begin{equation}\label{exd}\dim\Ext^1_{\mo_C}(\mo_C(s_2),\mo_C(s_1))=\dim\Ext_{\mo_X}^1(\mo_C(s_2),\mo_C(s_1)),.\end{equation}
By Serre duality we have LHS $=\dim H^0(\mo_{\p^1}(s_2-s_1-2))=\max\{0,s_2-s_1-1\}$ and
\footnotesize
\begin{eqnarray}\text{RHS }&=&-\chi(\mo_C(s_2),\mo_C(s_1))+\dim\Hom_{\mo_X}(\mo_C(s_2),\mo_C(s_1))+\dim\Ext_{\mo_X}^2(\mo_C(s_2),\mo_C(s_1))\nonumber\\
&=&C^2+\max\{0,s_1-s_2+1\}+\dim\Hom_{\mo_X}(\mo_C(s_1),\mo_C(s_2+K_X)).\nonumber\\
&=&C^2+\max\{0,s_1-s_2+1\}+\max\{0,s_2-s_1+1+C.K_X\}\nonumber\end{eqnarray}
\normalsize
If $C.K_X\geq -1$, then $C^2\leq-1$ and we have
\[\begin{cases}\text{LHS=RHS}=s_2-s_1-1,~\text{if }s_1\leq s_2-1\\
 \text{LHS=RHS}=0,~\text{if }s_2-1<s_1\leq s_2-1-C^2
\end{cases}\]
If $C.K_X\leq -2$, then $C^2\geq0$ and we have $\text{LHS=RHS}=s_2-s_1-1,~\text{if }s_1\leq s_2-1-C^2$.  Hence (\ref{exd}) holds if $s_1<s_2-C^2$.
\end{proof}

\section{Proof of the main theorem.}
In this section we let $C$ be any integral curve on $X$, not necessarily smooth.  We want to prove the following theorem.

\begin{thm}\label{main4}For any integral curve $C\subset X$, we have
\[\dim \bm_X(nC,\chi)^{r_1,\cdots,r_m}\leq \frac{n^2C^2}2+\frac{C.K_X}2\left(\displaystyle{\sum_{j=1}^m}r_j^2\right).\]
In particular, 
$$\dim \bm_X(nC,\chi)\begin{cases}\leq \frac{n^2C^2}2+\frac{nC.K_X}2=g_{nC}-1,\text{ if }C.K_X\leq 0\\  \\ =\frac{n^2C^2}2+\frac{n^2C.K_X}2=n^2(g_{C}-1),\text{ if }C.K_X>0\end{cases}.$$
\end{thm}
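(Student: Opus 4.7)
The plan is to adapt the strategy of Proposition~\ref{smc} while replacing the plain upper filtration by the torsion-free upper filtration from Remark~\ref{tfuF}. The essential geometric input is that $C$, being a Cartier divisor on the smooth surface $X$, is automatically a Gorenstein curve with dualizing bundle $\omega_C\cong \mo_C(K_X+C)$. Two features of the singular case force the restriction to pure sheaves: the factors $R_i$ of the plain upper filtration can acquire $0$-dimensional torsion on $C$ even when $\mf$ is pure, and the dimension formula $\dim \bcoh_\eta=r(\eta)^2(g_C-1)$ fails for classes $\eta$ represented only by sheaves of infinite projective dimension on $C$ (e.g.\ $\mo_p$ for $p$ a singular point of $C$), since $\Ext^2_{\mo_C}$ need not vanish on such sheaves. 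This also clarifies why the theorem bounds only $\bm_X$ and not $\overline{\bm_X}$ when $C$ is singular.

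For $\mf\in \bm_X(nC,\chi)^{r_1,\ldots,r_m}$, I would take its torsion-free upper filtration $\{\widetilde{\mf}^i\}$ with torsion-free factors $\widetilde{R}_i$ on $C$ of rank $r_i$, and stratify the stack by the Grothendieck classes $\widetilde{\beta}_i=[\widetilde{R}_i]\in K(C)$; once the ranks are fixed these strata are discrete. Next, set up analogues of the stacks $\widehat{\bcoh}_{\underline{\widetilde{\beta}}}$ and $\widetilde{\bcoh}_{\underline{\widetilde{\alpha}}}$ and of the maps $\Phi,\pi_m,\pi_q,\pi_s$ in diagram~(\ref{nmaps}), now parametrizing chains of torsion-free sheaves on $C$ connected by the morphisms $\widetilde{g}^i\colon \widetilde{R}_i(-C)\to \widetilde{R}_{i-1}$, which are generically isomorphic but may fail to be surjective at singular points of $C$. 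Since torsion-free sheaves on the Gorenstein curve $C$ have projective dimension at most one, $\Ext^2_{\mo_C}$ vanishes on them, and $\bcoh^{tf}_{\widetilde{\beta}_i}$ is smooth of dimension $-\chi_C(\widetilde{\beta}_i,\widetilde{\beta}_i)=r_i^2(g_C-1)$, matching the smooth case exactly.

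With this framework, the fiber-dimension estimates (ii)--(v) of Lemma~\ref{dimet} carry over: (ii) and (iii) rest on semicontinuity of $\Hom$ on $C$ and are insensitive to smoothness; the Serre-duality step in (v), namely $\Ext^2_{\mo_X}(\widetilde{R}_m,\widetilde{\mf}^{m-1})\cong \Hom_{\mo_X}(\widetilde{\mf}^{m-1},\widetilde{R}_m(K_X))^*$ and its reduction to $\Hom_{\mo_C}(\widetilde{R}_{m-1},\widetilde{R}_m(K_X))$ using that $\widetilde{R}_m$ is an $\mo_C$-module, employs only Serre duality on the smooth surface $X$; and the intersection number $\chi_X(\widetilde{R}_m,\widetilde{\mf}^{m-1})$ computed by Riemann--Roch on $X$ depends only on ranks. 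Assembling these, the arithmetic of Lemma~\ref{smcb} reproduces the bound $\frac{n^2C^2}{2}+\frac{C.K_X}{2}\sum_j r_j^2$.

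The main obstacle I anticipate is controlling the difference between the plain upper filtration $\{\mf^i\}$ and its torsion-free enlargement $\{\widetilde{\mf}^i\}$: each $\widetilde{\mf}^i$ is an extension of $\mf^i$ by a $0$-dimensional torsion sheaf supported on $C$, and one must verify that this extra torsion data does not inflate the dimension of the stratum beyond the budget coming from the torsion-free chain. A related subtlety is that the non-surjectivity of $\widetilde{g}^i$ means the image of $\pi_s$ sits in a redefined $\widehat{\bcoh}$ of generically-iso chains rather than of everywhere-surjective chains, so the analogues of Lemma~\ref{dimet}(i),(iv) must be re-established, with the fiber of the corresponding $\pi_m$ bounded by $\sum_i\dim\Hom_{\mo_C}(\widetilde{R}_i,\widetilde{R}_{i-1}(C))$ without the surjectivity open condition. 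Both issues should be tractable via purity of $\mf$ combined with semicontinuity, but together constitute the most delicate bookkeeping in the argument.
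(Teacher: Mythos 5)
Your strategy---directly generalizing the chain-stack analysis of Section 3 to a singular integral $C$ by means of the torsion-free upper filtration---is a genuinely different route from the paper's, but it rests on a false homological claim. It is not true that torsion-free sheaves on the Gorenstein curve $C$ have projective dimension at most one: by Auslander--Buchsbaum, a maximal Cohen--Macaulay module (equivalently, a torsion-free module in dimension one) of finite projective dimension over a one-dimensional Cohen--Macaulay local ring is free, so any torsion-free sheaf that is not locally free at a singular point of $C$ has \emph{infinite} projective dimension there. Such sheaves unavoidably occur among the factors $\widetilde R_i$ (e.g.\ the maximal ideal at a node, or pushforwards from partial normalizations). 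For such an $\ms$ one has $\mathcal{E}xt^2_{\mo_C}(\ms,\ms)\neq 0$ at the singular point and hence $\Ext^2_{\mo_C}(\ms,\ms)\neq 0$; the alternating sum $\sum_{j\geq 0}(-1)^j\dim\Ext^j_{\mo_C}(\ms,\ms)$ does not even terminate; the two-variable Serre duality $\dim\Hom_{\mo_C}(R_i,R_{i+1}(K_X))=\dim\Ext^1_{\mo_C}(R_{i+1},R_i(C))$ used in Lemma \ref{smcb} requires one of the arguments to be perfect and is therefore not available; and the fiber formula $\dim \pi_q^{-1}=-\sum_{i<j}\chi_C(\alpha_j,\alpha_i)$, which rests on $\dim\Ext^1-\dim\Hom=-\chi_C$, breaks down for the same reason. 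Even your starting input $\dim\bcoh^{tf}_{\eta}=r(\eta)^2(g_C-1)$ is a nontrivial fact about torsion-free sheaves on curves with planar singularities, not a consequence of the Gorenstein property, so the argument as written does not establish it.

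The paper avoids all of this by never doing homological algebra on a singular curve. It inducts on the arithmetic genus: blow up a singular point $P$ of multiplicity $\theta$, use Proposition \ref{inject} to replace $\bm_X(nC,\chi)$ by its image under $f^*$ inside $\bm_{\widetilde X}(n\widetilde C,\chi)$, split $f^*\mf$ by (\ref{decom}) into a piece supported on $nC_0$ and a piece on $n\theta E$, bound the gluing data by $-\chi_X(\mf_E,\mf_0)$ computed on the smooth surface $\widetilde X$, and then apply the induction hypothesis to the lower-genus curves $C_0$ and $E\cong\p^1$ together with the convexity inequality of Lemma \ref{ineq} on the rank vectors. All $\Ext$ computations thereby take place on smooth surfaces or smooth (rational) curves. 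If you wish to pursue your direct approach, you must first supply substitutes for $\Ext^2$-vanishing, for the Euler form, and for Serre duality on the singular curve; the blow-up induction is exactly the device that makes these difficulties disappear.
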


We proceed the proof of Theorem \ref{main4} by induction on the arithmetic genus $g_C$ of $C$.  If $g_C=0$, then $C\cong \p^1$ and Theorem \ref{main4} follows immediately from Proposition \ref{smc}.  Let's assume $g_C>0$ and $C$ is not smooth.  Denote by $P$ a singular point of $C$ and $\theta\geq 2$ the multiplicity of $C$ at $P$.   
Let $\widetilde{X}\xrightarrow{f}X$ be the blow-up at $P$ and $\widetilde{C}:=f^{-1}C$.  Then $\widetilde{C}=C_0+\theta E$ with $C_0$ an integral curve with $g_{C_0}<g_{C}$ and $E\cong \p^1$ the exceptional divisors.  We have analogous definitions for stacks $\overline{\bm_{\widetilde{X}}}(n\widetilde{C},\chi)$ and $\bm_{\widetilde{X}}(n\widetilde{C},\chi)$ although $\widetilde{C}$ is not integral.
\begin{prop}\label{inject}Let $C'\subset X$ be any curve not necessarily integral.  Let $\widetilde{C'}=f^{-1}C'$.  For every $\mf\in\bm_X(C',\chi)$, the pull-back $f^*\mf\in\bm_{\widetilde{X}}(\widetilde{C'},\chi)$.  Moreover $f_*f^*\mf\cong \mf$ and $f^*:\bm_X(C',\chi)\ra\bm_{\widetilde{X}}(\widetilde{C'},\chi)$ is injective.
\end{prop}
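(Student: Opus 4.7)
The plan is to exploit that $\mf$ is pure of dimension one on the smooth surface $X$, so by Auslander--Buchsbaum it has projective dimension exactly one at every stalk of its support. Locally $\mf$ therefore fits into a free resolution
\[ 0 \to \mo_X^r \xrightarrow{A} \mo_X^r \to \mf \to 0, \]
where $\det A$ generates the $0$-th Fitting ideal, which for such an $\mf$ coincides with the annihilator $I_{C'}$. This presentation controls everything after pullback.

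The key intermediate step is the Tor-vanishing $L_i f^*\mf = 0$ for $i > 0$. Applying $f^*$ to the resolution produces $\mo_{\widetilde X}^r \xrightarrow{f^*A} \mo_{\widetilde X}^r \to f^*\mf \to 0$, so $L_1 f^*\mf = \ker(f^*A)$, and injectivity of $f^*A$ reduces to the fact that $f^*\det A = f^*\delta_{C'}$ is a nonzerodivisor on the integral smooth surface $\widetilde X$, which is immediate since $f$ is dominant and $\delta_{C'}\neq 0$. Three consequences flow from this: (i) $f^*\mf$ has projective dimension at most one on the smooth $2$-dimensional $\widetilde X$, so Auslander--Buchsbaum forces depth at least one at every support point, proving purity; (ii) the $0$-th Fitting ideal of $f^*\mf$ is locally $(f^*\delta_{C'})$, cutting out $\widetilde{C'} = f^{-1}(C')$, so the schematic support of $f^*\mf$ is $\widetilde{C'}$; (iii) by the projection formula together with the blow-up identity $Rf_*\mo_{\widetilde X} = \mo_X$,
\[ Rf_* f^*\mf \;=\; Rf_* Lf^*\mf \;=\; \mf \otimes^L_{\mo_X} Rf_*\mo_{\widetilde X} \;=\; \mf, \]
yielding $f_* f^*\mf \cong \mf$ and $\chi(f^*\mf) = \chi(Rf_* f^*\mf) = \chi$. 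Injectivity of the functor on objects then follows instantly: if $f^*\mf \cong f^*\mg$, then $\mf \cong f_* f^*\mf \cong f_* f^*\mg \cong \mg$.

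The main point of care is the Fitting-ideal identification invoked in step (ii): one needs that the $0$-th Fitting ideal of a pure $1$-dimensional sheaf of projective dimension one on a smooth surface coincides with its annihilator, so that the Fitting support computation on $\widetilde X$ recovers the scheme-theoretic support $\widetilde{C'}$ and not merely a thickening of it. This is a local commutative algebra fact about principal ideals in regular local rings of dimension two, and once it is in hand the remaining verifications are formal, with everything else controlled by the single matrix $A$ and the well-known blow-up identity for $Rf_*\mo_{\widetilde X}$.
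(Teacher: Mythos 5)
Your overall route is the same as the paper's: take a length-one locally free resolution of $\mf$, observe that the pulled-back presentation map stays injective (equivalently $L_if^*\mf=0$ for $i>0$), deduce purity of $f^*\mf$ from the resulting projective dimension bound, and obtain $f_*f^*\mf\cong\mf$ from $Rf_*\mo_{\widetilde{X}}\cong\mo_X$ plus the projection formula; the only cosmetic difference is that you get injectivity of $f^*$ directly from $f_*f^*\cong\mathrm{id}$, whereas the paper passes through the adjunction $\Hom_{\mo_{\widetilde{X}}}(f^*\mf_1,f^*\mf_2)\cong\Hom_{\mo_X}(\mf_1,\mf_2)$. All of that is sound and matches the paper.

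The one step I must push back on is precisely the one you flag as the ``main point of care'': the asserted identity $\operatorname{Fitt}_0(\mf)=\operatorname{Ann}(\mf)$ for a pure $1$-dimensional sheaf of projective dimension one on a smooth surface is \emph{false} as a general statement. Take $\mf=\mo_C^{\oplus 2}$ with $C$ cut out by $\delta_C$: the presentation matrix is $\delta_C$ times the identity, so $\operatorname{Fitt}_0(\mf)=(\delta_C^2)$ while $\operatorname{Ann}(\mf)=(\delta_C)$. Fortunately the identity is not needed. In this paper ``schematic support'' must be read as the Fitting support --- this is forced by the claim that $\mathbf{Vect}_C(n,\chi)$ is a substack of $\bm_X(nC,\chi)$, since a rank-$n$ bundle on $C$ has annihilator $I_C$ but Fitting ideal $I_C^{\,n}=I_{nC}$. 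With that convention, the support statement for $f^*\mf$ follows from the base-change property of Fitting ideals alone, namely $\operatorname{Fitt}_0(f^*\mf)=f^{-1}\operatorname{Fitt}_0(\mf)\cdot\mo_{\widetilde{X}}=(f^*\delta_{C'})$, and no comparison with annihilators is required. If instead you insist on defining support by the annihilator, then your argument does have a genuine gap at this point, since the ``local commutative algebra fact'' you invoke to bridge Fitting ideal and annihilator simply does not hold.
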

\begin{proof}Since $\mf$ is pure, we can take a locally free resolution of it
\begin{equation}\label{reof}0\ra\ma\xrightarrow{A}\mb\ra\mf\ra0,\end{equation}
where $\ma,\mb$ are locally free.
Pull back (\ref{reof}) to $\widetilde{X}$ and we get
\[f^*\ma\xrightarrow{f^*A} f^*\mb\ra f^*\mf\ra 0,\]
where $f^*A$ has to be injective because it is generically injective and $f^*\ma$ is a locally free sheaf over $\widetilde{X}$.  So we have a locally free resolution of length 1 for $f^*\mf$
\begin{equation}\label{reopf}0\ra f^*\ma\xrightarrow{f^*A} f^*\mb\ra f^*\mf\ra 0.\end{equation}
Hence $f^*\mf$ has to be pure of dimension 1 and obviously is supported at $f^*(C')=n\widetilde{C'}$.

Push forward (\ref{reopf}) to $X$ and we get
\[0\ra f_*f^*\ma\xrightarrow{f_*f^*A} f_*f^*\mb\ra f_*f^*\mf\ra R^1f_*f^*\ma,\]
where $R^1f_*f^*\ma\cong \ma\otimes R^1f_*\mo_{\widetilde{X}}=0$.  As $R^if_*\mo_{\widetilde{X}}=0,\forall~i>0$ and $f_{*}\mo_{\widetilde{X}}\cong \mo_X$, it is easy to see $f_*f^*\ma\cong \ma, f_*f^*\mb\cong \mb, f_*f^*A=A$ and hence $f_*f^*\mf\cong \mf$.  Also $\chi(f^*\mf)=\chi(R^{\bullet}f_*f^*\mf)=\chi(f_*f^*\mf)=\chi(\mf)$.

On the other hand for any $\mf_1,\mf_2\in \bm_X(C',\chi)$, we have
$$\Hom_{\mo_{\widetilde{X}}}(f^*\mf_1,f^*\mf_2)\cong \Hom_{\mo_{X}}(\mf_1,f_*f^*\mf_2)\cong \Hom_{\mo_{X}}(\mf_1,\mf_2).$$
The proposition is proved.
\end{proof}
We also have the following lemma.
\begin{lemma}\label{flat}For every pure 1-dimensional sheaf $\mf$ over $X$, $\Tor^1_{\mo_X}(\mf,\mo_{\widetilde{X}})=0$.  In particular, any injective map $\mf_1\stackrel{\imath}{\hookrightarrow}\mf_2$ with $\mf_2/\mf_1$ purely of dimension one remains injective after pulled back to $\widetilde{X}$.
\end{lemma}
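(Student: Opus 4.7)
The plan is to mimic the locally-free-resolution argument already used in the proof of Proposition \ref{inject}, since the vanishing of $\Tor^1_{\mo_X}(\mf,\mo_{\widetilde{X}})$ is essentially the injectivity statement that was established there. Since $\mf$ is a pure 1-dimensional sheaf on the surface $X$, its depth at every point of its support is at least $1$, so by Auslander--Buchsbaum it has projective dimension at most $1$ and admits a length-one locally free resolution
\[ 0\ra \ma\xrightarrow{A}\mb\ra\mf\ra 0.\]
Applying $-\otimes_{\mo_X}\mo_{\widetilde{X}}$ produces
\[ 0\ra \Tor^1_{\mo_X}(\mf,\mo_{\widetilde{X}})\ra f^*\ma\xrightarrow{f^*A} f^*\mb\ra f^*\mf\ra 0,\]
so the vanishing of $\Tor^1$ reduces to the injectivity of $f^*A$. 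I would argue this exactly as in Proposition \ref{inject}: the map $A$ is generically an isomorphism (its cokernel $\mf$ has 1-dimensional support), so $f^*A$ is generically injective, and because $f^*\ma$ is locally free and thus torsion free, a generically injective map out of it must be injective everywhere.

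For the ``in particular'' clause, I would plug the short exact sequence $0\ra\mf_1\ra\mf_2\ra\mf_2/\mf_1\ra 0$ into the long exact Tor sequence. The quotient $\mf_2/\mf_1$ is pure of dimension one by hypothesis, so the first part of the lemma gives $\Tor^1_{\mo_X}(\mf_2/\mf_1,\mo_{\widetilde{X}})=0$. The pulled-back sequence
\[ 0\ra f^*\mf_1\ra f^*\mf_2\ra f^*(\mf_2/\mf_1)\ra 0\]
is therefore still short exact, and the injectivity of $f^*\mf_1\hookrightarrow f^*\mf_2$ follows.

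The main obstacle is the existence of the length-one locally free resolution, which implicitly requires $X$ to be smooth (at least at the blow-up center $P$). This is not a real obstruction in the setting of the paper: away from $P$ the map $f$ is an isomorphism, so $\Tor^1(\mf,\mo_{\widetilde{X}})$ vanishes automatically there, and it suffices to verify the statement in an analytic or \'etale neighborhood of $P$, where one may assume $X$ is smooth and the Auslander--Buchsbaum calculation applies. Everything else is a direct unwinding of the Tor long exact sequence.
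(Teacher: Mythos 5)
Your proof is correct and follows essentially the same route as the paper: the paper also takes the length-one locally free resolution $0\ra\ma\ra\mb\ra\mf\ra0$ from the proof of Proposition \ref{inject}, tensors with $\mo_{\widetilde{X}}$, and concludes $\Tor^1_{\mo_X}(\mf,\mo_{\widetilde{X}})=0$ from the injectivity of $f^*A$ established there. Your handling of the ``in particular'' clause via the long exact Tor sequence is exactly the intended (and in the paper unwritten) argument.
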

\begin{proof}Pull back (\ref{reof}) to $\widetilde{X}$ and we get
\[\Tor^1_{\mo_X}(\ma,\mo_{\widetilde{X}})\ra \Tor^1_{\mo_X}(\mf,\mo_{\widetilde{X}})\ra f^*\ma\xrightarrow{f^*A} f^*\mb\ra f^*\mf\ra 0.\]
As we have already seen in the proof of Proposition \ref{inject}, $f^*A$ is injective.  $\Tor^1_{\mo_X}(\ma,\mo_{\widetilde{X}})=0$ by local freeness of $\ma$.  Hence $\Tor^1_{\mo_X}(\mf,\mo_{\widetilde{X}})=0$.
\end{proof}
\begin{lemma}\label{filtype}Let $\mf\in\bm_X(nC,\chi)^{r_1,\cdots,r_m}$, then
$$f^*\mf\otimes \mo_{nC_0}\in\overline{\bm_{\widetilde{X}}}(nC_0,\chi_0)^{r_1,\cdots,r_m}$$
for some suitable $\chi_0$.
\end{lemma}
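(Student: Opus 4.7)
The plan is to carry the upper filtration of $\mf$ on $nC$ over to $\mg:=f^*\mf\otimes\mo_{nC_0}$ by pulling back and restricting, and to exploit the fact that $f:\widetilde{X}\to X$ is an isomorphism outside the exceptional divisor $E$ so that the rank invariants $r_1,\dots,r_m$ survive unchanged.

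First I would verify the set-up: $\mg$ is a $1$-dimensional sheaf on $\widetilde{X}$ annihilated by $\delta_{C_0}^n$, so its schematic support is contained in $nC_0$. Because $f|_{\widetilde{X}\setminus E}:\widetilde{X}\setminus E\xrightarrow{\sim}X\setminus\{P\}$ is an isomorphism, and $f^*\mo_X(C)\cong\mo_{\widetilde{X}}(C_0+\theta E)$ with $\delta_E$ a unit on $\widetilde{X}\setminus E$, the section $f^*\delta_C$ and $\delta_{C_0}$ differ by a unit there and $\mo_{nC_0}$ agrees with $f^*\mo_{nC}$ there; hence $\mg|_{\widetilde{X}\setminus E}\cong \mf|_{X\setminus\{P\}}$. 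In particular the schematic support of $\mg$ equals $nC_0$ on a dense open of $C_0$, and therefore equals $nC_0$ everywhere. Setting $\chi_0:=\chi(\mg)$ places $\mg$ in $\overline{\bm_{\widetilde{X}}}(nC_0,\chi_0)$.

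Next, starting from the upper filtration $0=\mf^0\subsetneq\mf^1\subsetneq\cdots\subsetneq\mf^m=\mf$ with quotients $R_i$ of rank $r_i$, I would define
\[
\mg^i:=\mathrm{Im}\bigl(f^*\mf^i\otimes\mo_{nC_0}\longrightarrow\mg\bigr),\qquad 0\le i\le m,
\]
and consider the filtration $0\subseteq\mg^1\subseteq\cdots\subseteq\mg^m=\mg$ with quotients $\widetilde{R}_i:=\mg^i/\mg^{i-1}$. These quotients are annihilated by $\delta_{C_0}$, so they are sheaves on $C_0$. Under $f|_{\widetilde{X}\setminus E}$ the filtration $\mg^{\bullet}$ restricts to the upper filtration of $\mg|_{\widetilde{X}\setminus E}\cong \mf|_{X\setminus\{P\}}$, with successive quotients corresponding to the $R_i|_{C\setminus\{P\}}$. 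Since $C_0$ is integral with generic point $\eta_0\in\widetilde{X}\setminus E$, the rank of $\widetilde{R}_i$ on $C_0$ is computed at $\eta_0$, yielding $r(\widetilde{R}_i)=r(R_i)=r_i$; likewise $\delta_{C_0}^{m-1}\mg$ is nonzero at $\eta_0$ while $\delta_{C_0}^m\mg$ vanishes there, so the upper filtration of $\mg$ has length $m$.

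The main technical point I expect to address carefully is that pulling back $\delta_C^m\mf=0$ only yields $\delta_{C_0}^m\delta_E^{m\theta}\mg=0$, not $\delta_{C_0}^m\mg=0$ outright. Any discrepancy is annihilated by a power of $\delta_E$ and is therefore a $0$-dimensional sheaf supported on the finite set $C_0\cap E$; such $0$-dimensional torsion affects neither the length of the upper filtration (read off at the generic point of $C_0$) nor the ranks of its quotients (which are generic invariants on the integral curve $C_0$). Absorbing these discrepancies and invoking the generic comparison above gives $\mg\in\overline{\bm_{\widetilde{X}}}(nC_0,\chi_0)^{r_1,\dots,r_m}$, as required.
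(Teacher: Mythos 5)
Your argument is correct and lands in the same place as the paper's, but by a genuinely different device. The paper's proof takes the \emph{torsion-free} upper filtration $\widetilde{\mf}^{\bullet}$ of Remark \ref{tfuF}, whose quotients are pure of dimension one precisely so that Lemma \ref{flat} (vanishing of $\Tor^1_{\mo_X}(-,\mo_{\widetilde{X}})$ for pure $1$-dimensional sheaves) applies; the pulled-back chain $f^*\widetilde{\mf}^{\bullet}$ is then an honest filtration of $f^*\mf$ by subsheaves, which is compared generically with the upper filtration of $f^*\mf\otimes\mo_{nC_0}$. You instead push the \emph{ordinary} upper filtration forward into $\mg=f^*\mf\otimes\mo_{nC_0}$ by taking images, which sidesteps both Lemma \ref{flat} and the torsion-free modification; the cost is that your intermediate quotients are only $\mo_{C_0}$-modules up to corrections on $C_0\cap E$ (since $f^*\delta_C=\delta_{C_0}\delta_E^{\theta}$ up to a unit, the inclusion $\delta_C\mf^i\subseteq\mf^{i-1}$ only yields $\delta_{C_0}\delta_E^{\theta}\mg^i\subseteq\mg^{i-1}$, not annihilation of $\mg^i/\mg^{i-1}$ by $\delta_{C_0}$ itself). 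Both routes ultimately rest on the same two facts: $f$ is an isomorphism off $E$, and the invariants $r_1,\dots,r_m$ are ranks, hence computed at the generic point of the integral curve $C_0$, which lies off $E$.

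One caveat, which your write-up shares with the paper's own proof: the \emph{length} of the upper filtration is not a generic invariant --- by Remark \ref{sup} it is determined by global, not generic, nonvanishing of $\delta_{C_0}^{k-1}\mg$ --- and $\delta_{C_0}^{m}\mg$ really can be a nonzero $0$-dimensional sheaf on $C_0\cap E$. Already for $\mf=\mo_C^{\oplus 2}$ and $n=2$ (so $m=1$) one computes $\delta_{C_0}\cdot\bigl(f^*\mf\otimes\mo_{2C_0}\bigr)\cong\mo_{\theta E\cap C_0}^{\oplus 2}\neq 0$. So the canonical upper filtration of $\mg$ may acquire extra rank-zero quotients at the bottom, and the stated membership in $\overline{\bm_{\widetilde{X}}}(nC_0,\chi_0)^{r_1,\dots,r_m}$ holds only after discarding these $0$-dimensional pieces (equivalently, for the pure quotient of $\mg$, which is what actually enters the proof of Theorem \ref{main4}). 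Your last paragraph correctly isolates the discrepancy as $0$-dimensional torsion on $C_0\cap E$, but the parenthetical claim that the length is ``read off at the generic point'' is the one assertion that is not literally true as stated.
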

\begin{proof}For $\mf\in\bm_X(nC,\chi)^{r_1,\cdots,r_m}$ we can take its torsion-free upper filtration $\widetilde{\mf}^{\bullet}$ as in Remark \ref{tfuF}, hence by Lemma \ref{flat} the pull-back of the filtration $f^*\widetilde{\mf}^{\bullet}$ is still a filtration of $f^*\mf$ which generically coincides with the upper filtration of $f^*\mf\otimes\mo_{nC_0}$, hence
we have $f^*\mf\otimes \mo_{nC_0}\in\overline{\bm_{\widetilde{X}}}(nC_0,\chi_0)^{r_1,\cdots,r_m}.$
\end{proof}


\begin{proof}[Proof of Theorem \ref{main4}]By Proposition \ref{inject}, it is enough to estimate the dimension of $f^*(\bm_{X}(nC,\chi))\subset \bm_{\widetilde{X}}(n\tilde{C},\chi)$. For any $\mf\in\bm_X(nC,\chi)^{r_1,\cdots,r_m}$, $f^*\mf$ lies in the following sequence
\begin{equation}\label{decom}0\ra \mf_{0}\ra f^*\mf\ra \mf_E\ra 0,\end{equation}
where $\mf_E$ is the torsion free quotient of $f^*\mf\otimes \mo_{n\theta E}$.
Since $\mf_{0}$ is the extension of the torsion free quotient of $f^*\mf\otimes\mo_{C^0}(-n\theta E)$ by a zero dimensional sheaf, by Lemma \ref{exfil} and Lemma \ref{filtype} we have $\mf_0\in \bm_{\widetilde{X}}(nC_0,\chi_0)^{r_1,\cdots,r_m}$ with some suitable $\chi_0$.

Take the upper filtration $0=\mf^0\subsetneq \mf^1\subsetneq \cdots \subsetneq\mf^m=\mf$.  Then we have
\[f^*\mf\otimes \mo_{i\theta E}\cong f^*(\mf/\mf^{m-i})\otimes\mo_{i\theta E}, \forall~i=1,\cdots,m-1,\]
which is because $$f^*\mf\otimes \mo_{i\theta E}\cong f^*\mf\otimes \mo_{i\widetilde{C}}\otimes \mo_{i\theta E}\cong f^*(\mf\otimes\mo_{iC})\otimes \mo_{i\theta E}.$$

On the other hand the schematic support of $f^*(\mf/\mf^{m-i})$ is $\left(\displaystyle{\sum_{j=0}^{i+1}r_{m-j}}\right)\widetilde{C}+\eta_i E$ and $\eta_i=0$ iff $\mf/\mf^{m-i}$ contains no torsion supported at $P$.  Hence we have
\[f^*\mf\otimes \mo_{n\theta E}\in \overline{\bm_{\widetilde{X}}}(n\theta E,\chi_E)^{r_1^1,\cdots,r_1^{\theta^1},\cdots,r_m^1,\cdots,r_m^{\theta^m}}\]
such that
\[f^*\mf\otimes\mo_{i\theta E}\in \overline{\bm_{\widetilde{X}}}(n\theta E,\chi_E)^{r_{m-i+1}^1,\cdots,r_{m-i+1}^{\theta^{m-i+1}},\cdots,r_m^1,\cdots,r_m^{\theta^m}},\forall~ i=1,\cdots,m.\]
Therefore \begin{equation}\label{ietheta}\theta^j\leq \theta,\forall ~j=1,\cdots,m.\end{equation}  
Moreover we have 
\begin{equation}\label{iesum}\sum_{j=k}^m(\sum_{t=1}^{\theta^j}r_j^t)\geq \theta(\sum_{j=k}^mr_j),k=2,\cdots,m;~\sum_{j=1}^m(\sum_{t=1}^{\theta}r_j^t)= \theta(\sum_{j=1}^mr_j).\end{equation}

Notice that we also have $\mf_E\in \bm_{\widetilde{X}}(n\theta E,\chi_E)^{r_1^1,\cdots,r_1^{\theta^1},\cdots,r_m^1,\cdots,r_m^{\theta^m}}$ by Lemma \ref{exfil}.

Denote by $\Delta$ the set of all $\underline{r}:=(r_1^1,\cdots,r_1^{\theta^1},\cdots,r_m^1,\cdots,r_m^{\theta^m})$ satisfying both (\ref{ietheta}) and (\ref{iesum}).
We have the map between stacks induced by (\ref{decom})
\[f^*(\bm_X(nC,\chi)^{r_1,\cdots,r_m})\xrightarrow{\pi_E}\coprod_{\chi_0} \bm_{\widetilde{X}}(nC_0,\chi_0)^{r_1,\cdots,r_m}\times \coprod_{\chi_E,\underline{r}\in \Delta} \bm_{\widetilde{X}}(n\theta E,\chi_E)^{\underline{r}}.\]

It is easy to see the fiber of $\pi_E$ is of dimension no bigger than $-\chi_X(\mf_{E},\mf_0)=nC_0.n(\theta E)=n^2C_0.\theta E$ since $\Ext^2(\mf_E,\mf_0)=0$.  By applying the induction assumption to $C_0$ and $E$, we get
\begin{eqnarray}\dim \bm_X(nC,\chi)^{r_1,\cdots,r_m}&\leq& n^2C_0.(\theta E)+\frac{n^2C_0^2}2+\frac{C_0.K_{\widetilde{X}}}2\left(\sum_{j=1}^m r_j^2\right)\nonumber\\
&&+\max_{\underline{r}\in \Delta}\left\{\frac{n^2\theta^2 E^2}2+\frac{E.K_{\widetilde{X}}}2\left(\sum_{j=1}^m\left(\sum_{t=1}^{\theta^j}(r_j^t)^2\right)\right)\right\}.\end{eqnarray}
Since $E.K_{\widetilde{X}}=-1<0$, together with the following Lemma \ref{ineq} we get
\begin{eqnarray}\label{final}\dim \bm_X(n C,\chi)^{r_1,\cdots,r_m}&\leq& n^2C_0.(\theta E)+\frac{n^2C_0^2}2+\frac{C_0.K_{\widetilde{X}}}2\left(\sum_{j=1}^m r_j^2\right)\nonumber\\
&&+\frac{n^2\theta^2 E^2}2+\frac{\theta E.K_{\widetilde{X}}}2\left(\sum_{j=1}^m r_j^2\right)\nonumber\\
&=&\frac{n^2 (C_0+\theta E)^2}2+\frac{(C_0+\theta E).K_{\widetilde{X}}}2\left(\sum_{j=1}^m r_j^2\right)\nonumber\\
&=&\frac{n^2 (\widetilde{C})^2}2+\frac{(\widetilde{C}).K_{\widetilde{X}}}2\left(\sum_{j=1}^m r_j^2\right).\end{eqnarray}
By $\widetilde{C}^2=C^2$ and $\widetilde{C}.K_{\widetilde{X}}=\widetilde{C}.(f^*K_X+E)=C.K_X$, we get the theorem.
\end{proof}

\begin{lemma}\label{ineq}Let $r_m\geq r_{m-1}\geq\cdots\geq r_1$ and let $\theta\in \mathbb{Z}_{>0}$.  If we have real numbers
$$r_m^1,r_m^2,\cdots,r_m^{\theta},r_{m-1}^1,\cdots, r_{m-1}^{\theta}, r_{m-2}^{1},\cdots, r_1^{\theta}$$
such that 
\[\sum_{j=k}^m\left(\sum_{t=1}^{\theta}r_j^t\right)\geq \theta\left(\sum_{j=k}^mr_j\right),k=2,\cdots,m;~\sum_{j=1}^m\left(\sum_{t=1}^{\theta}r_j^t\right)= \theta\left(\sum_{j=1}^mr_j\right).\]
Then we have
\[\sum_{j=1}^m\left(\sum_{t=1}^{\theta}(r_j^t)^2\right)\geq \theta\left(\sum_{j=1}^m(r_j)^2\right)\]
and the equality holds iff $r_j^t=r_j$ for all $t=1,\cdots,\theta$ and $j=1,\cdots,m$.  

In particular we can remove zeros in $\{r_j^t\}$ and ask rest of them to be positive, i.e.
\[r_m^1,r_m^2,\cdots,r_m^{\theta^1},r_{m-1}^1,\cdots, r_{m-1}^{\theta^2}, r_{m-2}^{1},\cdots, r_1^{\theta^m}>0\]
with $\theta^i\leq \theta$ for $i=1,\cdots,m$, then 
\[\sum_{j=1}^m\left(\sum_{t=1}^{\theta^j}(r_j^t)^2\right)\geq \theta\left(\sum_{j=1}^m(r_j)^2\right)\]
and the equality holds iff $\theta^j=\theta$ and $r_j^t=r_j$ for all $t=1,\cdots,\theta$ and $j=1,\cdots,m$. 
\end{lemma}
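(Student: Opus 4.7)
The plan is to reduce the inequality to a one-parameter inequality on the sums $a_j=\sum_{t=1}^{\theta} r_j^t$ by applying Cauchy--Schwarz within each group, and then to exploit the tail conditions via an Abel summation trick that uses the monotonicity $r_m\geq\cdots\geq r_1$.

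First, I would extend the second form of the lemma to the first by defining $r_j^t := 0$ for $\theta^j < t \leq \theta$; the monotonicity hypothesis on the $r_j$ is not used before the Cauchy--Schwarz step, and the tail inequalities are preserved. So it suffices to prove the first form. Set $a_j:=\sum_{t=1}^{\theta} r_j^t$ and $b_j:=a_j/\theta$. Cauchy--Schwarz applied to the $\theta$ numbers $r_j^1,\ldots,r_j^{\theta}$ gives
\[
\sum_{t=1}^{\theta}(r_j^t)^2 \;\geq\; \frac{a_j^2}{\theta} \;=\; \theta\, b_j^2,
\]
with equality iff $r_j^1=\cdots=r_j^{\theta}=b_j$. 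Summing in $j$ reduces the problem to showing $\sum_{j=1}^m b_j^2\geq \sum_{j=1}^m r_j^2$ under the tail conditions $B_k:=\sum_{j\geq k}b_j \geq \sum_{j\geq k}r_j =:R_k$ for $k=2,\ldots,m$ and $B_1=R_1$.

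Next, I would write $b_j-r_j=(B_j-R_j)-(B_{j+1}-R_{j+1})$ with the convention $B_{m+1}=R_{m+1}=0$, and do Abel summation against the non-decreasing sequence $(r_j)$:
\[
\sum_{j=1}^m r_j(b_j-r_j) \;=\; \sum_{j=2}^m (B_j-R_j)(r_j-r_{j-1}) \;\geq\; 0,
\]
where the boundary terms vanish because $B_1=R_1$ and $B_{m+1}=R_{m+1}=0$, and non-negativity uses $B_j\geq R_j$ together with $r_j\geq r_{j-1}$. Combining this with the algebraic identity
\[
\sum_{j=1}^m b_j^2 - \sum_{j=1}^m r_j^2 \;=\; \sum_{j=1}^m (b_j-r_j)^2 + 2\sum_{j=1}^m r_j(b_j-r_j)
\]
gives $\sum_j b_j^2 \geq \sum_j r_j^2 + \sum_j (b_j-r_j)^2 \geq \sum_j r_j^2$, with equality forcing $b_j=r_j$ for all $j$.

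Finally, tracing the chain of equalities backward gives the characterization. From $b_j=r_j$ we get $a_j=\theta r_j$, and the Cauchy--Schwarz step becomes an equality only when every $r_j^t$ in the group equals $b_j=r_j$. Since we padded with zeros, an extra non-zero entry $r_j$ would force some padded zero to equal $r_j$ as well, which is impossible unless $\theta^j=\theta$; this recovers the second (positivity) form of the equality statement. I do not foresee any real obstacle: the only technical point is the bookkeeping of the equality case between the two formulations, which is handled cleanly by the zero-padding reduction.
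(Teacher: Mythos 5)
Your proof is correct and is essentially the paper's argument in a lightly reorganized form: the paper substitutes $\epsilon_j^t=r_j^t-r_j$, expands the squares, and performs the same summation by parts of the cross term against the monotone sequence $(r_j)$ using the tail conditions, with $\sum_{j,t}(\epsilon_j^t)^2\geq 0$ absorbing what you split into the within-group Cauchy--Schwarz step plus $\sum_j(b_j-r_j)^2\geq 0$. The equality analysis and the zero-padding reduction of the second form to the first also match the paper's treatment.
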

\begin{proof}Let $\epsilon_j^t:=r_j^t-r_j$, then we have
\[\sum_{j=k}^m\left(\sum_{t=1}^{\theta}\epsilon_j^t\right)\geq 0,k=2,\cdots,m;~\sum_{j=1}^m\left(\sum_{t=1}^{\theta}\epsilon_j^t\right)=0.\]
Hence
\begin{eqnarray}
&&\sum_{j=1}^m\left(\sum_{t=1}^{\theta}(r_j^t)^2\right)=\sum_{j=1}^m\left(\sum_{t=1}^{\theta}(r_j+\epsilon_j^t)^2\right)\nonumber\\
&=&\theta\left(\sum_{j=1}^m(r_j)^2\right)+\sum_{j=1}^m\left(\sum_{t=1}^{\theta}(\epsilon_j^t)^2\right)+2\sum_{j=1}^m\left(r_j\left(\sum_{t=1}^{\theta}\epsilon_j^t\right)\right)\nonumber\\
&=&\theta\left(\sum_{j=1}^m(r_j)^2\right)+\sum_{j=1}^m\left(\sum_{t=1}^{\theta}(\epsilon_j^t)^2\right)+2\sum_{k=2}^m\left((r_{k}-r_{k-1})\left(\sum_{j=k}^m\left(\sum_{t=1}^{\theta}\epsilon_j^t\right)\right)\right)\nonumber\\
&\geq &\theta\left(\sum_{j=1}^m(r_j)^2\right),\nonumber
\end{eqnarray}
where the last inequality is because $r_k\geq r_{k-1}$ and $\displaystyle{\sum_{j=k}^m\left(\sum_{t=1}^{\theta}\epsilon_j^t\right)}\geq 0$.  It is easy to see the equality holds iff $\epsilon_j^t=0$ for all $t=1,\cdots,\theta$ and $j=1,\cdots,m.$
\end{proof}
\begin{rem}\label{algcl}As Theorem \ref{main4} only concerns the dimension, the assumption that the base field $\Bbbk$ is algebraically closed can be removed.
\end{rem}

Yao Yuan\\
Beijing National Center for Applied Mathematics,\\
Academy for Multidisciplinary Studies, \\
Capital Normal University, 100048, Beijing, China\\
E-mail: 6891@cnu.edu.cn.
\end{document}